\newcommand{\Prob}{\mathbb{P}}
\newcommand{\XU}{[X({\bf U})]}
\newcommand{\xU}{X({\bf U})}
\newcommand{\G}{\ensuremath{\left}}    
\newcommand{\D}{\ensuremath{\right}}    
\newcommand{\Z}{\ensuremath{\mathbb{Z}}}    
\newtheorem{lemma}{Lemma}[section]
\newtheorem{obs}{Observation}[section]
\newtheorem{defi}{Definition}[section]
\newtheorem{theo}{Theorem}[section]
\newtheorem{prop}{Proposition}[section]
\newtheorem{coro}{Corollary}[section]
\begin{document}
\DeclareGraphicsExtensions{.pdf,.gif,.jpg}
\begin{frontmatter}


\title{Chains with unbounded variable length memory: perfect simulation and visible regeneration scheme}

\author{Sandro Gallo} 
\ead{gsandro@ime.usp.br} 
\address{Universidade de S\~ao Paulo\\ Instituto de Matem\'atica e estat\'istica\\ 1010 Rua do Mat\~ao\\ BP 66281
CEP 
05315-970
  S\~ao Paulo\\ Brasil} 
\tnotetext[t1]{This work is a part of the author's PhD Thesis, written under the supervision of Prof. Antonio Galves and supported by a FAPESP fellowship (grant 2006/57387-0).}

\begin{abstract}
%
%

  We present a new perfect simulation algorithm for stationary chains having unbounded variable length memory. This is the class of infinite memory chains for which the family of transition probabilities is represented by a \emph{probabilistic context tree}. We do not assume any continuity condition: our condition is expressed in terms of the structure of the context tree. More precisely, the length of the contexts  is a deterministic function of the distance to the last occurrence of some determined string of symbols. It turns out that the resulting class of chains can be seen as a natural extension of the class of chains having a renewal string. In particular, our chains exhibit a visible regeneration scheme.
\end{abstract}
\begin{keyword}
Variable length memory chains\sep probabilistic context trees\sep perfect simulation\sep regeneration scheme.
\MSC {60G10 (primary), 60G99 (secondary).}
\end{keyword}


\end{frontmatter}
\section{Introduction}\label{introduction}


We introduce a new class of discrete time stochastic chains  ${\bf X}=(X_{n})_{n\in\mathbb{Z}}$, taking values in a countable alphabet  $A$. These chains have unbounded variable length memory. This means that  the state of the chain at time $0$ depends on an unbounded suffix of the past $\ldots X_{-2}X_{-1}$, whose length  depends on the values assumed by the chain in the past. 
In the present case, the length of this suffix depends on the distance to the last occurrence of a given finite reference string $a_{-i}\ldots a_{-1}$ of symbols of $A$. 
More precisely, there exists a function $f:\mathbb{N}\rightarrow \mathbb{N}$ such that if the last occurrence of $a_{-i}\ldots a_{-1}$ is at distance $k$ in the past, that is if $X_{-k-i}\ldots X_{-k-1}=a_{-i} \ldots a_{-1}$,
and for $j=i,\ldots,k+i-1$ we have $X_{-j}\ldots X_{-j+i-1}\neq a_{-i} \ldots a_{-1}$,
then we need to know $X_{-f(k)-i-k}\ldots X_{-k-i-1}X_{-k-i}\ldots X_{-1}$ in order to decide the state of the chain at time $0$:
\[
\ldots \overbrace{\underbrace{X_{-f(k)-i-k}\ldots X_{-k-i-1}}_{\textrm{length}=f(k)}\underbrace{X_{-k-i}\ldots X_{-k-1}}_{\textrm{last occurrence of}\,\,a_{-i}^{-1}}X_{-k}\ldots X_{-1}}^{\textrm{Suffix of the past we need to know to decide $X_{0}$}}.
\] 
In other words, the family of transition probabilities $P$ for these chains is such that 
\[
P(\cdot|\ldots b_{-2}b_{-1})=P(\cdot|\ldots c_{-2}c_{-1})
\]
whenever the last occurrence of $a_{-i}\ldots a_{-1}$ is at distance $k$ in $\ldots b_{-2}b_{-1}$ and 
\[
b_{-f(k)-1-k}\ldots b_{-2}b_{-1}=c_{-f(k)-1-k}\ldots c_{-2}c_{-1}.
\]
Observe that on $A=\{1,2\}$, if the reference string is the symbol $2$, and the function $f$ is identically $0$, we obtain the renewal chain with symbol $2$ as  renewal symbol. For this reason, we say that this class of stochastic chains generalizes the class of chains having a renewal string. 

We highlight three main parameters for the study of this class of chains: the  size of the reference string, the set of transition probabilities  to the symbols of this reference string, and the deterministic function $f$. 

We ask the following questions: (i) What shall we assume on these parameters in order to guarantee that there exists a stationary chain compatible with such a family of transition probabilities? (ii) Is this stationary chain unique? (iii) What are the statistical properties of this chain? (iv) Does the chain exhibits a regeneration scheme, as in the renewal case?

It is important to observe  that the existing results of the literature on chains of infinite order cannot answer these questions which, at least to our view, are quite natural. The main reason for this is the fact that since the seminal papers of \citet{onicescu/mihoc/1935}, the literature focussed on the so-called continuity assumption, which is not assumed here. In fact, the way we described the family of transition probabilities of our chains fits exactly in the notion of \emph{probabilistic context trees}, introduced by \cite{rissanen/1983}. It follows that the good framework for our study is the one of probabilistic context trees and not that of continuous family. Moreover, there is, so far, no ``well adapted'' (in a sense we will make clear later in this paper) criteria for the existence and uniqueness of the stationary chain compatible with a given probabilistic context tree.

As a consequence of this, the main method we used in order to answer the above questions is the constructive one: we give sufficient conditions on our parameters ensuring that we can perfectly simulate the chain from the stationary distribution. This is our first main result (Algorithms 1 and 2 and Theorem \ref{theo1}). As far as we know, the only perfect simulation algorithm for chains of infinite order, up to date, was the one of \cite{comets/fernandez/ferrari/2002}, which applies in the continuous framework. Our algorithm shares  several features with their algorithm and this is only due to the fact that both algorithms use the \emph{coupling from the past} method (CFTP in the sequel) introduced by  \citet{propp/wilson/1996} to perfectly simulate Markov chains. 

As a byproduct of Theorem \ref{theo1}, we have sufficient conditions for the existence and the uniqueness of the stationary chain (Corollary \ref{coro1}). We also show that this stationary chain has an \emph{hidden} regeneration scheme, and that  the expected size between two consecutive regeneration times is finite (Corollary \ref{coro2}). The denomination \emph{hidden} means that we cannot detect it on the realization of the  chain. This regeneration scheme arises from the perfect simulation algorithm, and is also similar to the one introduced by \cite{comets/fernandez/ferrari/2002}.

The last main result of this paper is the existence, under the same conditions, of  a visible regeneration scheme (Theorem \ref{theo2}). This regeneration scheme can be detected directly on the realization of the chain. But since our chains are not necessarily renewal, detecting the regeneration scheme means, in general, knowing the entire future of the chain.

We would like to emphasize the fact that the continuity assumption has been originally introduced by \cite{doeblin/fortet/1937} as a technical assumption, enabling them to obtain results (see discussion therein). As they say, it is quite natural in this optic to assume that the probability transition from $a_{-\infty}^{-1}$ to $a$ does not depend to much on the remote symbols of $a_{-\infty}^{-1}$. However, continuity is \emph{one} way to mathematically translate this assumption. The present paper gives a different one. 
The  success of the continuity assumption appeared three decades later (in the 60's), in part because  it is related with some well behaved dynamical systems and statistical mechanic models, through the Gibbs formalism (see for example \cite{bowen/2008}). However, from the application point of view, it is not clear that the real phenomena have to be represented through the continuous framework. It seems to us quite natural (and also of  mathematical interest)   to explore the non-continuous world. 

Therefore, the interest of the present work is threefold. First, it extends the class of renewal chains to a class of stochastic chains having a visible regeneration scheme which is not a renewal scheme.  Second, it gives an appropriate condition on the form of the context tree to guarantee the possibility to make a perfect simulation of the unique stationary chain compatible.  Finally, it seems to be the first attempt of the literature of chains of infinite memory considering the non-continuous case.

\vspace{0.1cm}
This paper is organized as follows. Section \ref{basicdef} gives the basic definitions and notation, introducing in particular the context tree framework. In Section \ref{anexample} we give an example, which motivates the above discussion and explains the reason why we have been taken to consider such a class of stochastic chains. Section \ref{ontheform} explains more precisely our assumptions using the context tree framework.  In section \ref{algorithm} we sketch the perfect simulation algorithm and state the results of this paper. Sections \ref{proof of main theorem}, \ref{provatheo1.5} and \ref{proofoftheorem2} are dedicated to the proofs of the results. In Section \ref{realistic}, we present  the complete perfect simulation algorithm, plus some simulations of the example of Section \ref{anexample}.
We terminate the paper with some literature on the areas involved in this paper.

\section{Basic definitions}\label{basicdef}

Let $A$ be a countable alphabet. Given two integers $m\leq n$, we
denote by $a_m^n$ the string $a_m \ldots a_n$ of symbols in
$A$. For any $m\leq n$, the length of the string $a_m^n$ is denoted by $|a_m^n|$ and is
defined by $|a_m^n| = n-m+1$. For any $n\in\mathbb{Z}$, we will use the convention that $a_{n+1}^{n}=\emptyset$, and naturally $|a_{n+1}^{n}|=0$. Given two strings $v$ and $v'$, we 
denote by $vv'$ the string of length $|v| + |v'| $ obtained by
concatenating the two strings. The concatenation of strings is also
extended to the case in which $v$ denotes a semi-infinite sequence,
that is $v=v_{-\infty}^{-1}$. If $n$ is a positive integer and $v$ a finite string of symbols in $A$, we denote by $v^{n}=vv\ldots v$ the concatenation  of $n$ times the string $v$.
We  denote
$$
A^{-\mathbb{N}}=A^{\{\ldots,-2,-1\}}\,\,\,\,\,\,\textrm{ and }\,\,\,\,\,\,\,  A^{\star} \,=\, \bigcup_{j=0}^{+\infty}\,A^{\{-j,\dots, -1\}}\, ,
$$
which are, respectively, the set of all infinite strings of past symbols and the set of all finite strings of past symbols. 
The case $j=0$ corresponds to the empty string $\emptyset$.

\subsection{Probabilistic context tree}

We say that a string $s$ is a \emph{suffix} (resp. \emph{prefix}) of another string
$v$ if $|s|\leq|v|$ and $v_{-|s|}^{-1}=s$ (resp. $v_{-|v|}^{-|v|+|s|-1}=s$). 
\begin{defi} 
  A subset $\tau$ of $A^{\star}\cup A^{-\mathbb{N}}$ is a \emph{tree} if no string
  $s \in \tau$ is a suffix of another string $v \in \tau$. This
  property is called the \emph{suffix property}. 
When we have $\sup\{|v| : v\in\tau\}= +\infty$, we say that the tree $\tau$ is \emph{unbounded}. 
\end{defi}

\begin{defi}
A tree $\tau$ is \emph{complete} if any element $a_{-\infty}^{-1}$ of $A^{-\mathbb{N}}$ has a 
suffix belonging to $\tau$. The suffix property implies that this
suffix is unique. We  call it the \emph{context} of the sequence $a_{-\infty}^{-1}$ and it  is denoted by $c_{\tau}(a_{-\infty}^{-1})$. A complete tree is called a \emph{context tree}.
\end{defi}

We also extend the notion of context for finite strings: for any $a_{m}^{n}\in A^{\star}$, $m\leq n$, we put $c_{\tau}(a_{m}^{n})=v$ if $v$ is a suffix of $a_{m}^{n}$ belonging to $\tau$. If no context of $\tau$ is suffix of $a_{m}^{n}$, we use the convention $c_{\tau}(a_{m}^{n})=\emptyset$. In particular, $c_{\tau}(\emptyset)=\emptyset$.

\begin{defi}\label{def:pct}
A \emph{probabilistic context tree on $A$} is an
  ordered pair $(\tau,p)$ such that
\begin{enumerate}
\item $\tau$ is a context tree;
\item $p = \{p(\cdot|v); v \in \tau\}$ is a family of transition
  probabilities over $A$.
\end{enumerate}
\end{defi}
Examples of probabilistic context trees are shown in Figures \ref{fig:finite-pct} (for the bounded case) and  \ref{fig:infinite-pct} (for the unbounded case). 

We call the attention of the reader on the following notation: if $v=v_{-|v|},\ldots,v_{-1}$ is a context of a context tree $\tau$, then $v_{-i}$, $i=1,\ldots,|v|$ denotes the path from the root to the leaf in the tree representation of $\tau$. In the conditional probability $p(a|v)$, we will swap the order of the symbols of the context $v$ to keep the overall temporal order:
\[
p(a|v)=p(a|v_{-1}\ldots v_{-|v|})
\]
is the probability that at time $n$, say, we put symbol $a$ given that at time $n-1$ we have $v_{-1}$, at time $n-2$ we have $v_{-2}$...

The context tree illustrated in Figure \ref{fig:finite-pct} is defined on $\{1,2,3\}$, and to each context $v$, we assign in the boxes the transition probabilities $p(1|v)$, $p(2|v)$, $p(3|v)$. In this paper we  only consider unbounded context trees, but we put this example to remember that the model has been introduced by \citet{rissanen/1983} in the bounded case. The context tree illustrated in Figure \ref{fig:infinite-pct} is defined on $\{1,2\}$, it corresponds to the discrete renewal chain, with renewal symbol $2$. This means that the successive occurrences of $2$ ``split'' the realization of the chain into i.i.d. block. For any $i\geq0$, $p_{i}$ denotes the transition probability $p(2|1^{i}2)$. The transition probability $p(1|1^{i}2)$ is  $1-p_{i}$.

More general examples of unbounded context trees (without specifying the transition probabilities) are given by Figures \ref{fig:lapartition}, \ref{fig:partition} and \ref{fig:partition3}.


\begin{figure}[h!]
\begin{center}
\subfigure[]{
\includegraphics[scale=0.9]{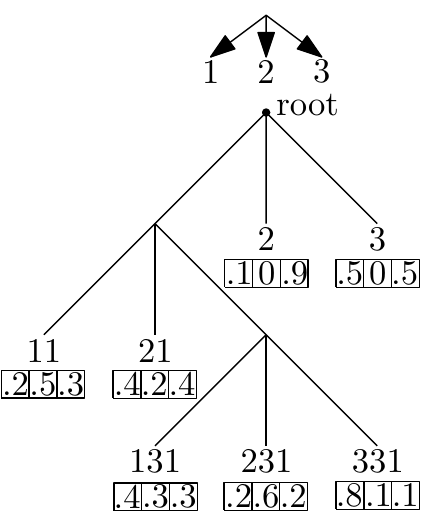}
\label{fig:finite-pct}}
\hspace{1cm}
\subfigure[]{
\includegraphics[scale=1]{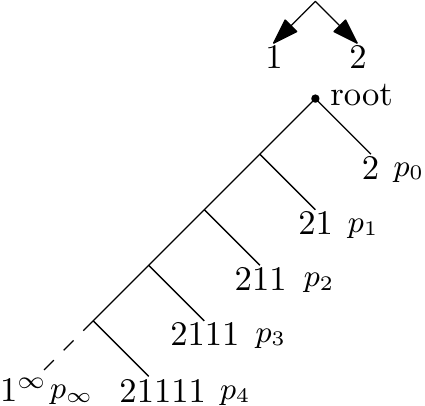}
\label{fig:infinite-pct}}
\caption{Examples of probabilistic context trees.}
\end{center}
\end{figure}

\begin{defi}
We say that a symbol $a$ of $A$ is \emph{$\epsilon$-regular} for a probabilistic context tree $(\tau,p)$ if 
\begin{equation}
\inf_{v\in\tau}p(a|v)\geq\epsilon >0\,.
\end{equation}
A string $w$ of $A^{\star}$ is also said to be $\epsilon$-regular if all symbols of $A$ appearing in $w$ are $\epsilon$-regular. For any  fixed $\epsilon>0$, we denote by $\mathcal{E}$ the set of elements of $A$ which are $\epsilon$-regulars.
\end{defi}

\subsection{Unbounded variable length memory chains}

\begin{defi}
We say that a  stochastic chain ${\bf X}=(X_{n})_{n\in\mathbb{Z}}$ of law $\mathbb{P}$ is compatible with a probabilistic context tree $(\tau,p)$ if for $\mathbb{P}$-a.e. past $a_{-\infty}^{-1}\in A^{-\mathbb{N}}$ and any $a\in A$ we have
\begin{equation}\label{compatible}
\mathbb{P}(X_{0}=a|X_{-\infty}^{-1}=a_{-\infty}^{-1})=p(a|c_{\tau}(a_{-\infty}^{-1})).
\end{equation}
These chains are called \emph{variable length memory chains}. If $\tau$ is unbounded, these chains are called \emph{unbounded} variable length memory chains.
\end{defi}

It remains to translate what we called the ``reference'' string in this framework. This will be done in Section \ref{ontheform}, but first, let us give an example.

\section{Discussion and examples}\label{anexample}

\subsection{Motivating the present work}\label{renewall}

Let us explain why the existing results of the literature cannot help us in our study. First, let us show that our chains need not to be continuous. A  family of transition probabilities $P$ is continuous if
\[
\beta_{k}:=\sup\{|P(a|a_{-\infty}^{-1})-P(a|b_{-\infty}^{-1})|:a\in A,a_{-\infty}^{-1},b_{-\infty}^{-1}\in A^{-\mathbb{N}}, a_{-k}^{-1}=b_{-k}^{-1}\}
\]
converges to zero when $k$ diverges. It is enough to consider the probabilistic context tree on $\{1,2\}$ illustrated in Figure \ref{fig:infinite-pct}, with the following probability transitions:
\[
p_{i}=\epsilon_{1}.{\bf 1}\{i\,\,\textrm{is odd}\}+\epsilon_{2}.{\bf 1}\{i\,\,\textrm{is even}\}\,\,\textrm{and}\,\,p_{\infty}:=p(2|1^{+\infty})=\epsilon_{3}
\]
where $\epsilon_{1}$, $\epsilon_{2}$ and $\epsilon_{3}$ are different real numbers in $(0,1)$. 
Then it is straightforward to check that 
\[
\beta_{k}=\sup\{|\epsilon_{1}-\epsilon_{2}|,|\epsilon_{2}-\epsilon_{3}|,|\epsilon_{1}-\epsilon_{3}|\}
\]
for any $k\geq0$. It follows from this simple observation that none of the chains in the class we consider have to be continuous. 

The second point motivating the present work is that the perfect simulation algorithm given by \cite{comets/fernandez/ferrari/2002} is not well adapted to context trees. This follows from the fact that it does not use the information of the context tree. One more time, in order to illustrate this fact, let us consider the simplest possible case: $(\tau,p)$ is such that
\begin{itemize}
\item $\tau$ is the context tree illustrated in Figure \ref{fig:infinite-pct}
\item $p$ satisfies the continuity condition of \cite{comets/fernandez/ferrari/2002}, and 
\item symbol $2$ is $\epsilon$-regular. 
\end{itemize}
In this case, there exists a very simple procedure to construct a sample $X_{0}^{n}$ of the chain compatible with $(\tau,p)$ form the stationary measure. We use the fact that $2$ is $\epsilon$-regular to couple the constructed chain with an i.i.d. sequence ${\bf U}$ of random variable uniformly distributed in $[0,1[$ in such a way that we put $X_{i}=2$ whenever $U_{i}\leq\epsilon$. We generate backward in time $U_{0}, U_{-1},\ldots$, and stop the procedure at the first time $-k$ such that  $U_{-k}<\epsilon$. We write $-k=\theta[0,n]$, it is a regeneration time for $[0,n]$, and we put $X_{-k}=2$. Then, we construct the sample recursively from time $-k+1$ up to $n$: for any $i\geq k+1$, we sample $X_{i}$ from the distribution $(p(\cdot|X_{i-1}\ldots X_{k})-\epsilon)/(1-\epsilon)$. Observe that $p(\cdot|X_{i-1}\ldots X_{k})$ is always well defined since $X_{k}=2$. The constructed sample is stationary and compatible with $(\tau,p)$. The way we defined $\theta[0,n]$ implies that it has a geometric distribution with parameter $\epsilon$. This procedure is well known in the perfect simulation literature, such a chain is said to be uniformly minorized by $\epsilon$ (see for example \cite{foss/tweedie/1998}). The perfect simulation algorithm we present in this paper works for much more general chains, and is an extension of the procedure we just described. 

Now, suppose we perform the above algorithm and the one of \cite{comets/fernandez/ferrari/2002} at the same time,  using the same sequence ${\bf U}$, to perfectly simulate a window $X_{0}^{n}$. Call $\theta^{CFF}[0,n]$ the regeneration time obtained using their algorithm. The most objective way to compare both algorithms is to check which one is faster. Their random variable $\theta^{CFF}[0,n]$ is defined by
\[
\theta^{CFF}[0,n]=\max\{k\leq0:U_{i}<a_{i-k},\,\,i=k,\ldots,n\},
\]
where $(a_{j})_{j\geq0}$ is a $[0,1]$-valued sequence increasing to $1$. The way it increases depends on the continuity assumption they make. Anyway, to compare here with the above algorithm, we assume that $a_{0}=\epsilon$ if only $2$ is $\epsilon$-regular, and $a_{0}=2\epsilon$ if both symbols are $\epsilon$-regular. If $\theta[0,n]=-k$, it means that $U_{-k}<a_{0}$, and in this case, say, it puts $X_{-k}=2$ if $U_{-k}<\epsilon$ and $X_{-k}=1$ if $\epsilon\leq U_{-k}<2\epsilon$ (if $1$ is also $\epsilon$-regular). The only way their algorithm could be faster than the above algorithm would be that their regeneration time occurs before the first time $U_{-k}\leq\epsilon$:
\[
\theta^{CFF}[0,n]>\max\{k\leq0:U_{j}<\epsilon\}=:\rho.
\]
Therefore, denoting by $\mathbb{P}$ the law of ${\bf U}$
\[
\mathbb{P}(\theta^{G}[0,n]<\theta^{CFF}[0,n])=\mathbb{P}(\rho<\theta^{CFF}[0,n]).
\]
It is difficult to find a good upper bound for this probability in general. We just mention two simple cases. In the case where $\inf_{v\in\tau}p(1|v)=0$, it is clear  that we have $\mathbb{P}(\rho<\theta^{CFF}[0,n])=0$.
The other case we can study easily is when  $p_{i}\searrow p_{\infty}=\epsilon$. A quick look to their algorithm shows that in this case, if $\rho<\theta^{CFF}[0,n]$, then the reconstructed sample is all $1$: $X_{\theta[0,n]}^{n}=1^{|\theta[0,n]|+n+1}$. Let us compute
\[
\mathbb{P}(\rho<\theta^{CFF}[0,n])=\sum_{i\geq0}\mathbb{P}(\rho=-i,\theta^{CFF}[0,n]>-i)
\]
\[
=\sum_{i\geq0}\sum_{j=0}^{i-1}\mathbb{P}(\rho=-i,\theta^{CFF}[0,n]=-j).
\]
Since the event $\{\theta^{CFF}[0,n]=-j\}$ depends only on $U_{-j}^{0}$, it follows that the later term is bounded above by
\[
\sum_{i\geq0}\sum_{j=0}^{i-1}(1-\epsilon)^{i-j+1}\mathbb{P}(X_{-j}^{n}=0^{n+j+1}).
\] 
Using the fact that the symbol $2$ is $\epsilon$-regular, the probability $\mathbb{P}(X_{-j}^{n}=1^{n+j+1})$ is bounded above by $(1-\epsilon)^{n+j+1}$, it follows that for some constant $C>0$
\[
\mathbb{P}(\theta^{G}[0,n]<\theta^{CFF}[0,n])\leq C(1-\epsilon)^{n}.
\]
Which goes very fast to $0$. These facts are a consequence of the following  general remark. Since their algorithm does not use the form of the context tree, it leads to regeneration times that have, \emph{a priori}, nothing to do with the \emph{natural regeneration times} one could expect: the successive occurrences of $2$ along the realization of the chain. This leads to another misleading situation: their regeneration times cannot be seen on the realization of the chain.

\subsection{Example}

As we said, the symbol $2$ is renewal for the chain compatible with the context tree of Figure \ref{fig:infinite-pct}. Therefore,  an example of extension of this model is the following: ``if the last occurrence of $2$ occurred at a distance $i$ in the past (that is, if the last $i$ symbols are all $1$'s), then  look back $i$ sites further this occurrence''.  In other words,
the contexts have the form 
\[
a_{-2i-1}^{-i-2}\,2\,1^{i}\,,\,\,\forall i\geq0\,\,\textrm{and}\,\,a_{-2i-1}^{-i-2}\in A^{i},
\]
the context tree is
\begin{equation}\label{lexamplecontexttree}
\tau=\cup_{i\geq0}\cup_{c\in A^{i}}c\,2\,1^{i}
\end{equation}
and is represented on in Figure \ref{fig:lapartition}.

\begin{figure}[htp]
\centering
\includegraphics{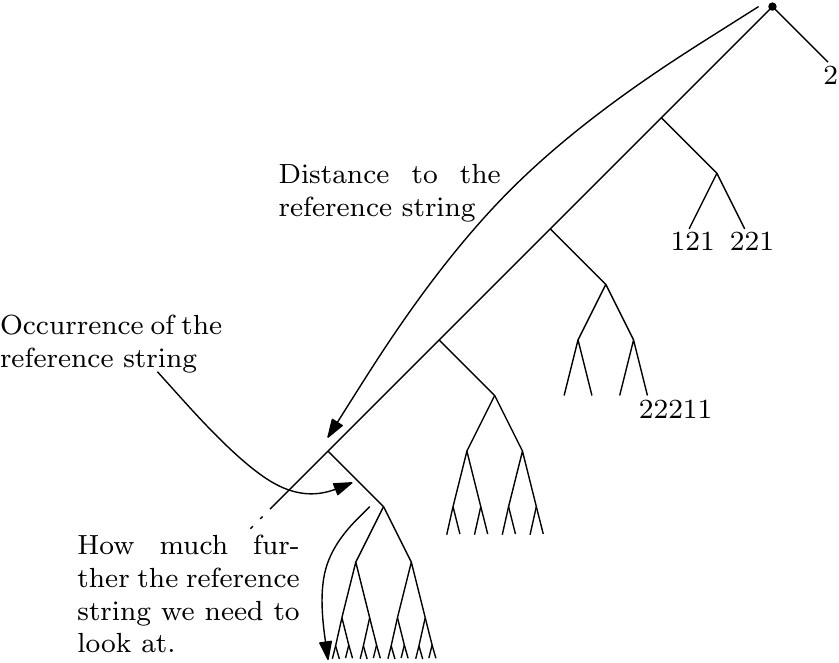}
\caption{The upper part of the context tree $\tau$ defined by (\ref{lexamplecontexttree}). We only specify some of the contexts. }
\label{fig:lapartition}
\end{figure}

Our results say that assuming $\inf_{v\in\tau}p(2|v)\geq\epsilon>0$, we can perfectly simulate the unique stationary chain ${\bf X}$ compatible with $(\tau,p)$ (Theorem \ref{theo1}). The perfect simulation algorithm extends the algorithm we described rapidly in Subsection \ref{renewall} for the renewal chain. The main difference is in the definition of the regeneration time. In Section \ref{realistic}, we will give an explicit perfect simulation of this chain. We also show in this work that almost surely, infinitely many occurrences of $2$  split the realization of ${\bf X}$ into independent and identically distributed strings (Theorem \ref{theo2}). However, since any occurrence of $2$ can be bypassed by a future context, with positive probability, this ``regeneration scheme'' differs substantially of the ``renewal scheme''.

\section{On the form of our context trees}\label{ontheform}

The examples of the preceding Section gave us an idea of how our families can be described in terms of probabilistic context trees. The aim of the following definitions is to define, using the probabilistic context tree framework, what we called ``reference string''.
At the end of this section, we give several examples explaining  these definitions. 

Suppose we are given an unbounded context tree $\tau$. For any finite string $w$ of $A^{\star}$ we define the function $m^{w}$ which associates to any context $v\in\tau$ the integer number
\begin{equation}\label{mw}
m^{w}(v):= \inf\{j: 0 \le j \le |v|-|w|,\, \mbox {\,such that\, \,}
v^{-j-1}_{-j-|w|}=w \}\, ,
\end{equation}
with the convention that $m^{w}(v) =+\infty$ if the set of indexes is
empty. In the context tree, $m^{w}(v)$ is the distance between the root and the first occurrence of $w$ in the context $v$.
If a context $v$ is such that
$m^{w}(v)=k$, then it can be written as the concatenation
\[
v = v_{-|v|}\ldots v_{-k-|w|-1}\,\,w\,\, v_{-k} \ldots v_{-1},
\]
where $v_{-j}^{-j+|w|-1}\neq w$ for $j=|w|,\ldots,k+|w|-1$.
The context trees considered in the present work have the following form. There exist a finite string $w$ of $A^{\star}$ and, related to this string, a function $\ell^{w}:\mathbb{N}\rightarrow\mathbb{N}$ satisfying $\ell^{w}(k)<+\infty$ for any $k\geq0$, such that for any $v\in\tau$
\[
|v|= m^{w}(v)+|w|+\ell^{w}(m^{w}(v)).
\]
The string $w$ is the \emph{reference string} for the context tree $\tau$. The function $\ell^{w}$ tells us ``how much further'' the last occurrence of $w$ we need to look back. It is precisely the notion of reference string which generalizes the notion of renewal string. 

Let us give some examples for the reader  to see how the notion of  reference string appears on the shape of the context trees.

\subsection{Example of Figure \ref{fig:infinite-pct}}

The  reference string is the symbol $2$. The function $\ell^{2}$ is identically $0$,
traducing the fact that $2$ is a renewal symbol.

\subsection{Example of Figures \ref{fig:lapartition} and \ref{fig:partition}}

The  reference string is also the symbol $2$ in both figures. In figure \ref{fig:lapartition}, $\ell^{2}$ is the identity
as we say in Section \ref{anexample}.  Comparing Figures \ref{fig:infinite-pct} and \ref{fig:lapartition} let clear the fact that the notion of  reference string generalizes the notion of renewal symbol. 

To simplify Figure \ref{fig:partition}, we made small triangles for subtrees. These subtrees are context trees of finite height since $2$ is a  reference string.
Suppose the context tree illustrated in Figures \ref{fig:partition} is such that $\ell^{w}(k)=1+k^{2}$. It follows that any context of the context tree of $\tau_{\textrm{Figure \ref{fig:lapartition}}}$ is a suffix of a context of $\tau_{\textrm{Figure \ref{fig:partition}}}$. In this case, we write $\tau_{\textrm{Figure \ref{fig:lapartition}}}\leq\tau_{\textrm{Figure \ref{fig:partition}}}$, and observe that
\[
\tau_{\textrm{Figure \ref{fig:infinite-pct}}}\leq\tau_{\textrm{Figure \ref{fig:lapartition}}}\leq\tau_{\textrm{Figure \ref{fig:partition}}}
\]
There is a difference between $\tau_{\textrm{Figure \ref{fig:partition}}}$ and the two others: the  reference string $2$ \emph{is not} a context. 


\subsection{Example of Figure \ref{fig:partition3}}

The  reference string is  $12$ and is not a context. We can see $5$ infinite size contexts but there is infinitely many of them. 

\begin{figure}[h!]
\begin{center}
\subfigure[]{
\includegraphics{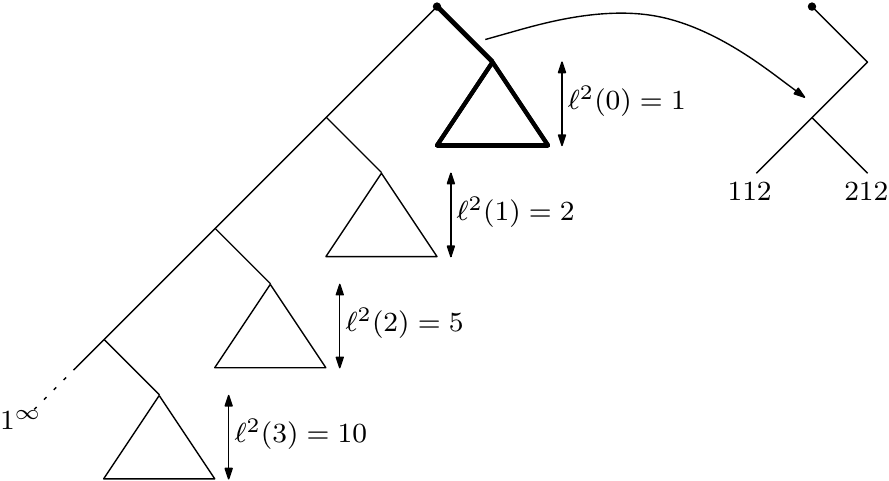}
\label{fig:partition}
}
\hspace{1cm}
\subfigure[]{
\includegraphics{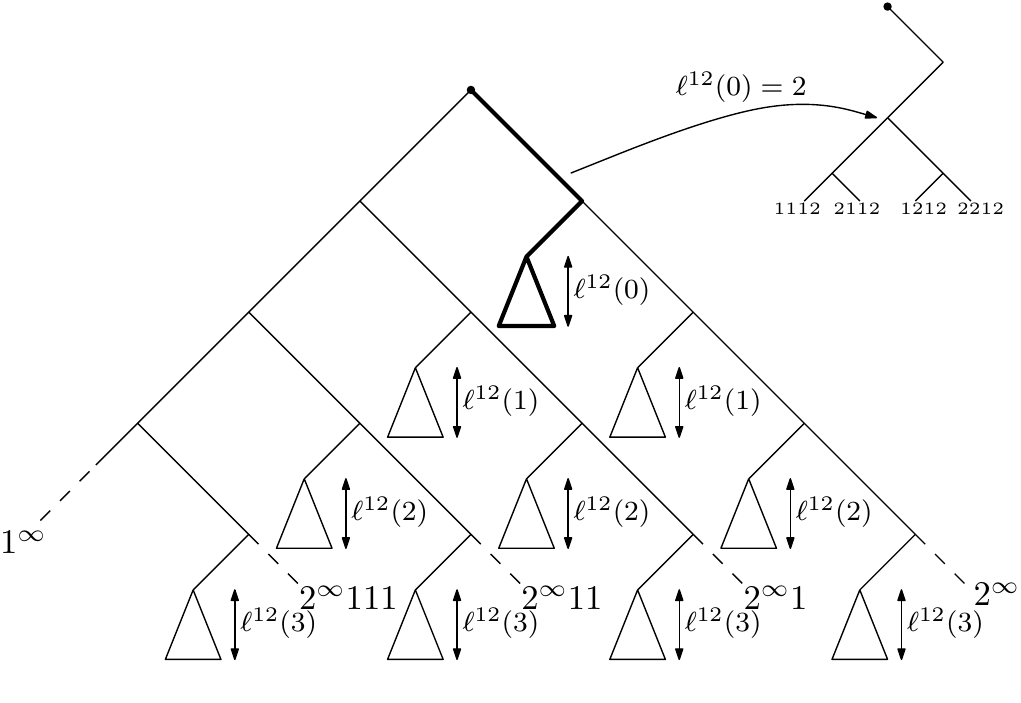}
\label{fig:partition3}
}
\caption{The upper part of two unbounded context trees, the one of (a) has as reference string the symbol $2$, the one of (b) has the string $12$.}
\end{center}
\end{figure}


\section{Perfect simulation algorithm and Statement of the results}\label{algorithm}

Consider an unbounded probabilistic context tree $(\tau, p)$. We recall that $\mathcal{E}$ is the set of elements of $A$ which are $\epsilon$-regulars for some $\epsilon>0$. 
We will assume without loss of generality that $A=\{1,2,\ldots\}$ and $\mathcal{E}=\{1,2,\ldots,\#\mathcal{E}\}$ ($\#\mathcal{E}$ denotes the cardinality of $\mathcal{E}$). 
We introduce a partition of $[0,1[$, illustrated in Figure  \ref{fig:partitionof[0,1]}, which will be used for the \emph{construction} of the chain.
Define for any $a\in \mathcal{E}$ and any $v\in\tau$ the intervals
\[
J(a|\emptyset)= [(a-1)\epsilon, a\epsilon[\,\,,\,\,\,\,\,\,J(a|v)=
\left[\#\mathcal{E}\epsilon+\sum_{i=1}^{a-1}(p(i|v)-\epsilon),\#\mathcal{E}\epsilon+\sum_{i=1}^{a}(p(i|v)-\epsilon)\right[
\]
and 
\[
K(a|v)= J(a|\emptyset)\textrm{ }\cup J(a|v).
\]
For any $a\in A\setminus\mathcal{E}=\{\#\mathcal{E}+1,\#\mathcal{E}+2,\ldots\}$
\[
J(a|v)=K(a|v)=
\left[\sum_{i=1}^{a-1}p(i|v),\sum_{i=1}^{a}p(i|v)\right[.
\]
Observe that for any $v\in\tau$
\[
J(1|\emptyset),\ldots,J(\#\mathcal{E}|\emptyset),J(1|v),\ldots,J(\#\mathcal{E}|v),J(\#\mathcal{E}+1|v),\ldots
\]
defines a partition of $[0,1[$ (see Figure \ref{fig:partitionof[0,1]}), and that for any $a\in A$ and any $v\in\tau$
\begin{equation}\label{lebesgue}
\lambda(K(a|v))=p(a\vert v)
\end{equation}
where $\lambda$ denotes the Lebesgue measure on $[0,1[$.

\vspace{0,3cm}
Let ${\bf U}=(U_{n})_{n \in \Z}$ be a sequence of i.i.d. random variables
uniformly distributed in $[0,1[$ and defined in some probability space
$(\Omega,\mathcal{F},\mathbb{P})$. All the chains considered in
what follows will be constructed using this sequence ${\bf U}$.

\begin{figure}[htp]
\centering
\includegraphics{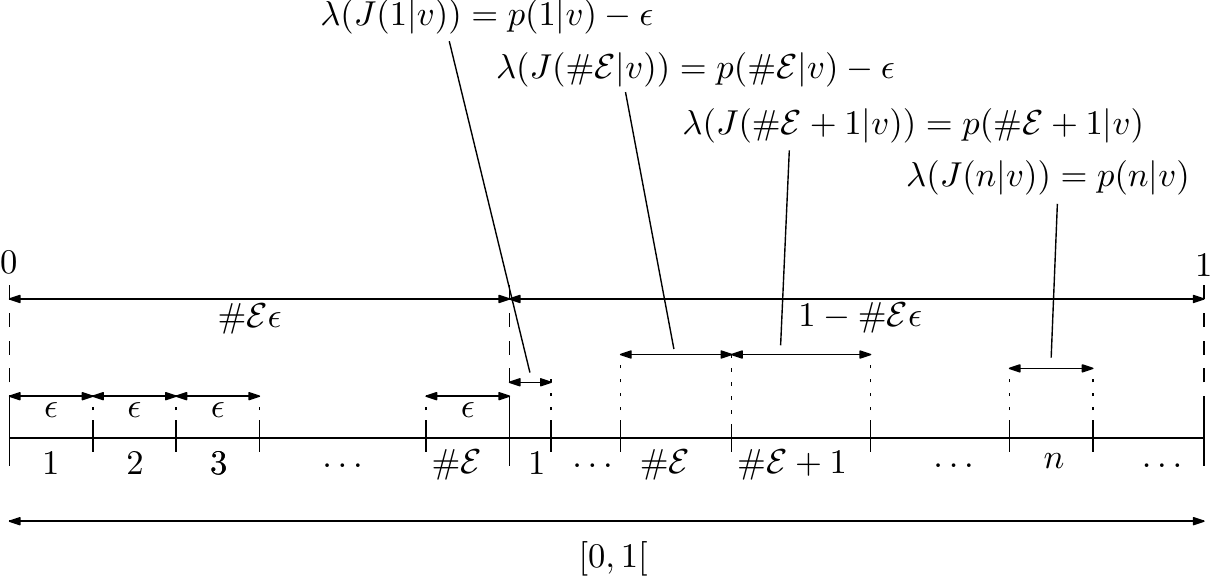}
\caption{Illustration of the partition of $[0,1[$ with the disjoint intervals $\{J(a|\emptyset)\}_{a\in \mathcal{E}}$ and $\{J(a|v)\}_{a\in \mathcal{A}}$ for some $v\in\tau$.}
\label{fig:partitionof[0,1]}
\end{figure}

\vspace{0,3cm}
We  construct a deterministic measurable function $X:[0,1[^{\mathbb{Z}}\rightarrow A^{\mathbb{Z}}$ such that the law $\mathbb{P}(\xU\in \cdot )$  is compatible with $(\tau,p)$. 
The construction of this function is carried out in such a way that for any $n\in\mathbb{Z}$, $\XU_{n}=a$ whenever $U_{n}\in J(a|\emptyset)$, for any $a\in \mathcal{E}$. 
Suppose that for some time index  $n\in\mathbb{Z}$ there exists a string $a_{-k}^{-1}\in A^{k}$ such that $U_{n-i}\in J(a_{-i}|\emptyset),\,i=1,\ldots,k$, in this case, we put
\[
\XU^{n-1}_{n-k}=a_{-k}^{-1}.
\] 
This is a sample that has been \emph{spontaneously} constructed. We have three situations at this point: (i) $U_{n}$ belongs to $[0,\#\mathcal{E}\epsilon[$, (ii) $U_{n}$ belongs to $[\#\mathcal{E}\epsilon,1[$ and $c_{\tau}(a_{-k}^{-1})=v\in\tau$, and (iii) $U_{n}$ belongs to $[\#\mathcal{E}\epsilon,1[$ and $c_{\tau}(a_{-k}^{-1})=\emptyset$. In situation (iii), we are not able to construct $\XU_{n}$ knowing \emph{only} $\XU_{n-k}^{n-1}$, and therefore, we need to determine more past symbols. In situations (i) and (ii), we can construct $\XU_{n}$ independently of $\XU_{-\infty}^{n-k-1}$: we put for any $a\in A$
\[
\XU_{n}=a \mbox{\, \, if\, \, } U_{n} \in K(a|v).
\]
Observe that $\XU^{n}_{n-k}$ has been constructed independently of $U_{-\infty}^{n-k-1}$ and $U_{n+1}^{+\infty}$. 
Suppose we want to sample the value of the stationary chain at time $0$. The idea of the algorithm is to generate the $U_{i}$'s backward in time until the first time $k\leq0$ in the past such that we can carry out the above  construction from $k$ to $0$, without using  $U_{-\infty}^{k-1}$ and $U_{1}^{+\infty}$. This is a CFTP algorithm. 

The size of the suffix of the past we need to know  to construct the next symbol depends here on the previously constructed past itself (excepted for the time indexes where the symbol is spontaneously constructed). In the CFTP algorithm introduced by \cite{comets/fernandez/ferrari/2002}, the size of this suffix of the past is defined by an i.i.d. random variable, totally independent of values assumed by the chain. This is the main difference between both works on the technical point of view, and this makes our perfect simulation algorithm a little bit more complicated. 
\vspace{0,1cm}
The three cases enumerated above by (i), (ii) and (iii) are formally described by the measurable function $F:[0,1[\times(A^{\star}\cup A^{-\mathbb{N}})\rightarrow A\cup\{\star\}$ defined as follows: for any $a_{m}^{n}\in A^{\star}\cup A^{-\mathbb{N}}$, $-\infty\leq m\leq n+1$, 
\begin{equation}\label{F}
F(u,a_{m}^{n})=\sum_{a\in A}a.\mathbbm{1}\{u\in K(a|c_{\tau}(a_{m}^{n}))\}+\star\mathbbm{1}\{u\in [\#\mathcal{E}\epsilon,1[,\, c_{\tau}(a_{m}^{n})=\emptyset\},
\end{equation}
with the conventions that $a_{n+1}^{n}=\emptyset$  and $c_{\tau}(\emptyset)=\emptyset$.
When  $c_{\tau}(a_{m}^{n})\neq \emptyset$, we have
\begin{equation}\label{lebe}
\mathbb{P}(F(U_{n+1},a_{m}^{n})=a)=\mathbb{P}(U_{n+1}\in K(a|c_{\tau}(a_{m}^{n})))=\lambda(K(a|c_{\tau}(a_{m}^{n})))=p(a|c_{\tau}(a_{m}^{n})).
\end{equation}
This function is an \emph{update function}. Since we consider chains of infinite order, this update function may return the symbol $\star$, meaning that we have not a sufficient knowledge of the past to continue the construction. 

We define for any $m\leq n$ the $\mathcal{F}(U_{m}^{n})$-measurable function $\mathcal{L}: [0,1[^{n-m+1}\rightarrow \{0,1\}$ which takes value $1$ if and only if we can construct $\XU_{m}^{n}$ independently of $U_{-\infty}^{m-1}$ and $U_{n+1}^{+\infty}$ using the construction described above. Formally
\[
\{\mathcal{L}(U_{m}^{n})=1\}:=\bigcup_{a_{m}^{n}\in A^{n-m+1}}\bigcap_{i=m}^{n}\{F(U_{i},a_{m}^{i-1})=a_{i}\}.
\]
Finally, we define for any $m\leq n\leq+\infty$
\begin{equation}\label{formaldef}
\theta[m,n]:=\max\{k\leq m:\mathcal{L}(U_{k}^{n})=1\}
\end{equation}
with the convention that  $\theta[m]:=\theta[m,m]$. This time is called \emph{regeneration time} for the window $[m,n]$, and is the first time before $m$ such that the construction described above is successful until time $n$.

We now state our main results and present a ``simplistic'' perfect simulation algorithm (Algorithm 1) for the construction of a sample $\XU_{m}^{n}$. A more ``realistic'' one (Algorithm 2) is given in Section \ref{realistic} together with an explicit perfect simulation in the particular case of Section \ref{anexample}.

\begin{algorithm}[h]
\caption{``Simplistic'' perfect simulation algorithm of the sample $\XU_{m}^{n}$} 
\begin{algorithmic}[1]
\STATE {\it Input:} $m$, $n$; {\it Output:} $\theta[m,n]$, $(\XU_{\theta[m,n]},\ldots,\XU_{n})$
\STATE  Sample $U_{m},\ldots,U_{n}$ uniformly in $[0,1[$\\
\vspace{0.1cm}
\STATE $i \leftarrow m$, $\theta[m,n] \leftarrow m$, $\XU_{m}^{n}\leftarrow\star^{n-m+1}$, $\mathcal{L}(U_{m}^{n})\leftarrow0$
\WHILE{$\mathcal{L}(U_{i}^{n})\neq 1$}
\STATE $i \leftarrow i-1$\\
Choose $U_{i}$ uniformly in $[0,1[$\\
\ENDWHILE
\STATE $\theta[m,n]\leftarrow i$
\WHILE{$\XU_{n}=\star$}
\STATE $\XU_{i}\leftarrow F(U_{i},\XU_{\theta[m,n]}^{i-1})$\\
$i \leftarrow i+1$
\ENDWHILE

\RETURN $\theta[m,n]$, $(\XU_{\theta[m,n]},\ldots, \XU_{n})$

\end{algorithmic}
\end{algorithm}

\begin{theo}\label{theo1}\emph{(Perfect simulation).}
Consider a probabilistic context tree $(\tau,p)$ having an $\epsilon$-regular  reference string $w$. If
\begin{equation}\label{rate-increasing}
\limsup_{k\rightarrow\infty}\frac{\log(\ell^{w}(k))}{C_{\epsilon}k}<1\,\,\,\,,\,\,\,\,\,\,\,C_{\epsilon}:=-\frac{1}{|w|}\log(1-\epsilon^{|w|})>0
\end{equation}
then Algorithms 1 and 2 stop almost surely after a finite number of steps, i.e., we have for any $-\infty<m\leq n\leq+\infty$ 
\begin{equation}\label{a.s.}
\mathbb{P}( \theta[m,n] >- \infty ) = 1.
\end{equation}
\end{theo}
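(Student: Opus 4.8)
The plan is to bypass the existence/uniqueness machinery and argue directly from the coupling-from-the-past construction of Algorithm~1: I want to show that, almost surely, arbitrarily far in the past there are configurations of $\mathbf U$ from which the forward reconstruction closes up before it reaches the target window, i.e.\ $\{k\le m:\mathcal L(U_k^n)=1\}\neq\emptyset$. Since $\{\mathcal L(U_k^{n+1})=1\}\subseteq\{\mathcal L(U_k^n)=1\}$, it is enough to treat a one-point window, say $[m,n]=[0,0]$, and to arrange that the regeneration times produced are valid for every finite $n$ simultaneously (the reconstruction we exhibit will actually run forward forever), so that the case $n=+\infty$ follows by letting $n\to\infty$.

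The basic probabilistic input is that, because every symbol of the reference string $w$ is $\epsilon$-regular, the ``spontaneous'' intervals $J(a|\emptyset)=[(a-1)\epsilon,a\epsilon)$ do not depend on the context. Hence, for each $t\ge 0$, the event that $U_{-t-|w|},\dots,U_{-t-1}$ fall into the intervals $J(\cdot|\emptyset)$ of the successive symbols of $w$ — which forces $X^{-t-1}_{-t-|w|}=w$ in \emph{any} reconstruction — has probability exactly $\epsilon^{|w|}$, independently of everything else, and these events are independent over disjoint length-$|w|$ blocks. Grouping $\mathbb Z$ into consecutive length-$|w|$ blocks, the distance back from $0$ to the nearest such \emph{spontaneous occurrence of $w$} has a geometric-type tail: the probability of no spontaneous $w$ in a window of length $L$ is at most $(1-\epsilon^{|w|})^{\lfloor L/|w|\rfloor}$, which up to a constant is $e^{-C_\epsilon L}$ with $C_\epsilon$ as in (\ref{rate-increasing}). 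This is exactly the mechanism that fixes $C_\epsilon$.

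Next I would isolate a \emph{regeneration event} at a time $k<0$: (i) a run of $\ell^w(0)$ consecutive spontaneous symbols, immediately followed by (ii) a spontaneous occurrence of $w$, and (iii) in the forward reconstruction started at $k$, every time a context is needed the last occurrence of $w$ seen so far lies at a distance $t_i$ for which the demanded look-back $t_i+|w|+\ell^w(t_i)$ still fits inside the already-constructed segment $[k,i-1]$. Using $|v|=m^w(v)+|w|+\ell^w(m^w(v))$ and the definition of $c_\tau$, one checks that on this event $\mathcal L(U_k^0)=1$: (i)--(ii) make $c_\tau$ nonempty at the first reconstruction step, and then the elementary accounting ``available length $i-k$ versus demanded length $t_i+|w|+\ell^w(t_i)$'' goes through because each fresh occurrence of $w$ resets $t_i$ to $0$ and because the demand at time $i$ only involves distances $t_i$ not exceeding the current gap to the previous $w$. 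Conceptually, the $\ell^w(0)$ spontaneous symbols in (i) are needed only to \emph{start} the cascade; the symbols that later contexts demand to the left of some occurrence of $w$ are supplied recursively, either by already-reconstructed symbols or by an earlier spontaneous $w$, so the regeneration event is really a ``house of cards'' of spontaneous $w$-blocks with controlled gaps, bottoming out at one fixed-length spontaneous run.

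Finally I would do the counting. Hypothesis (\ref{rate-increasing}) says $\ell^w(k)\le e^{(1-\delta)C_\epsilon k}$ for all large $k$ and some $\delta>0$. Listing, going back from $0$, the successive spontaneous occurrences of $w$ and the gaps $G_1,G_2,\dots$ between consecutive ones, the cascade started at the $r$-th occurrence back reaches $0$ unless some gap $G_j$ ($j\le r$) is, at its scale $k$, larger than about $k$ while the demand there is only $\ell^w(k)\le e^{(1-\delta)C_\epsilon k}$; since $\mathbb P(G_j>k)\lesssim e^{-C_\epsilon k}$, the total probability that a ``bad'' gap ever occurs at scales $\ge K$ is at most $\sum_{k\ge K}e^{-\delta C_\epsilon k}\to 0$, so a.s.\ only finitely many gaps are bad. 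Hence all sufficiently far-back spontaneous occurrences of $w$ are genuine regeneration points, and since the events ``spontaneous $w$, preceded by an $\ell^w(0)$-run, with no bad gap downstream'' have probability bounded below and can be decoupled along a suitable subsequence of times, Borel--Cantelli gives $\mathbb P(\theta[0,0]>-\infty)=1$; the extension to $[m,n]$ and $n=+\infty$ is then routine. The step I expect to be the real obstacle is making (iii) simultaneously strong enough to force $\mathcal L=1$ and weak enough to fail with summable probability: one must control, uniformly along the whole forward reconstruction of $[k,0]$, the running distance to the last $w$ and verify $i-k\ge t_i+|w|+\ell^w(t_i)$, and then convert this pathwise condition into a summable estimate using only the tail $e^{-C_\epsilon L}$ of the spontaneous-$w$ gaps together with $\limsup_k\log\ell^w(k)/(C_\epsilon k)<1$. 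Everything else is bookkeeping.
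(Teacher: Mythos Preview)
Your strategy matches the paper's first reductions: using only the spontaneous occurrences of $w$ (the paper's auxiliary chain ${\bf Z}$), bounding the look-back needed at time $i$ by $L_i=m_i+|w|+\ell^w(m_i)$ with $m_i$ the distance to the last spontaneous $w$, and recognizing that the forward reconstruction from $k$ succeeds iff $L_i\le i-k$ for every $i\ge k$. The paper does exactly this (its Sections~6.1--6.3), with a block-rescaling to size $|w|$ to handle general reference strings.

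The gap is in your counting paragraph. Write out the cascade condition at the far end of the $j$-th gap $G_j$ back from~$0$: up to $|w|$-corrections it reads $\ell^w(G_j)\le\sum_{l=j+1}^{r-1}G_l$, i.e.\ the demand at gap $j$ must be covered by the \emph{cumulative} room between the starting point $P_r$ and the far side of that gap. This couples all the gaps; in particular, for $j$ close to $r$ the available room is tiny whatever $G_j$ is, so no ``bad gap'' criterion depending on $G_j$ alone can decide whether the cascade passes, and a Borel--Cantelli on single gaps does not start the construction. Concretely, from $\mathbb P(G>k)\lesssim e^{-C_\epsilon k}$ and $\ell^w(k)\le e^{(1-\delta)C_\epsilon k}$ one gets at best $\mathbb P(\ell^w(G_j)>m)\lesssim m^{-1/(1-\delta)}$, which is polynomial, not the exponential $e^{-\delta C_\epsilon k}$ you wrote; and even summing this over $j$ does not control the joint event ``cascade from $P_r$ reaches $0$'' because of the coupling just described. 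You correctly flag~(iii) as the real obstacle, but the sketch does not resolve it.

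The paper's missing idea is to make the ``surplus room'' Markovian. After rescaling it introduces the process $D^{(n)}_i=(i-i^{(n)}-\bar L_i)\vee 0$, where $i^{(n)}$ is the last zero of $D^{(n)}$; a coalescence argument gives $\{\bar\theta[0,n]<-l\}=\bigcup_{k=0}^{n}\{D^{(-l-1)}_k=0\}$, and then one shows via the renewal relation $U(s)=1/(1-F(s))$ that the state $0$ is \emph{transient} for $D^{(0)}$ as soon as $\sum_i(1-\epsilon^{|w|})^{\bar\ell^{-1}(i)}<\infty$, which is exactly what the growth hypothesis on $\ell^w$ delivers. Transience yields summability of $u_k=\mathbb P(D^{(0)}_k=0)$ and hence $\mathbb P(\theta[0,n]<-l)\to 0$ for every $n\le+\infty$. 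This renewal/transience step is what absorbs the inter-gap coupling that your Borel--Cantelli leaves exposed.
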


In the rest of the paper, we will often write $X_{i}$ for $\XU_{i}$ (and ${\bf X}$ for $\xU$) in order to avoid overloaded notations, keeping in mind the fact that for any $i$, $X_{i}$ is constructed as a deterministic function of ${\bf U}$. Actually, by Theorem \ref{theo1},  $X_{i}$ depends only on a $\mathbb{P}$-a.s. finite part of this sequence:  $X_{i}:=[X(\ldots,u_{\theta[i]-1},U_{\theta[i]},\ldots,U_{i},u_{i+1},\ldots)]_{i}$ for any ${\bf u}\in [0,1[^{\mathbb{Z}}$. 

\begin{coro}\label{coro1}\emph{(Existence and uniqueness)}.
The output of Algorithms 1 and 2 are samples of the unique stationary chain compatible with $(\tau,p)$. We will call $\mu$ the stationary measure of ${\bf X}$:
\[
\mu:=\mathbb{P}(X({\bf U})\in \cdot).
\]
\end{coro}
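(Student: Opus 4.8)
\textbf{Proof plan for Corollary \ref{coro1}.}

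The plan is to show that the perfect simulation construction of Theorem \ref{theo1} produces a chain with the three required properties: (a) the output samples are realizations of a chain compatible with $(\tau,p)$, (b) this chain is stationary, and (c) it is the unique stationary chain compatible with $(\tau,p)$. The first two are essentially built into the construction and only need to be verified; the uniqueness is where the real work lies.

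For compatibility, I would argue directly from the definition of the update function $F$ and of $\theta[m,n]$. By Theorem \ref{theo1}, for $\mathbb{P}$-a.e. realization of ${\bf U}$ the time $\theta[0]$ is finite, so $X_0 = X({\bf U})_0$ is a well-defined deterministic function of $(U_{\theta[0]},\ldots,U_0)$, reconstructed by iterating $F$ from time $\theta[0]$ up to time $0$. The key point is that, conditionally on the reconstructed past $X_{-\infty}^{-1}$ having context $c_\tau(X_{-\infty}^{-1}) = v$, the last symbol is $X_0 = F(U_0, X_{-\infty}^{-1})$, and by \eqref{lebe}, $\mathbb{P}(F(U_0, X_{-\infty}^{-1}) = a \mid \mathcal{F}(U_{-\infty}^{-1})) = \lambda(K(a|v)) = p(a|v)$, since $U_0$ is independent of $U_{-\infty}^{-1}$ and the context $v$ is $\mathcal{F}(U_{-\infty}^{-1})$-measurable on the event $\{\theta[0] > -\infty\}$. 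This gives \eqref{compatible}. For stationarity, I would invoke the fact that the construction is a factor of the i.i.d. (hence stationary and ergodic) sequence ${\bf U}$ through a shift-equivariant map: $X({\bf U})_{n} = X(\sigma^n {\bf U})_0$ where $\sigma$ is the shift, because the definitions of $F$, $\mathcal{L}$ and $\theta$ are all translation-covariant. Thus $\mu = \mathbb{P}(X({\bf U}) \in \cdot)$ is shift-invariant, and in fact ergodic.

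For uniqueness, suppose ${\bf Y}$ is any stationary chain of law $\nu$ compatible with $(\tau,p)$, realized on a probability space carrying an independent copy of ${\bf U}$ (enlarging the space if necessary). The idea is a coupling argument: using the same ${\bf U}$, reconstruct both the canonical chain ${\bf X} = X({\bf U})$ and, starting from $Y_{\theta[m,n]-1}^{-\infty}$ \emph{together with} the fresh variables $U_{\theta[m,n]}^{n}$, a version of ${\bf Y}$ on the window $[m,n]$. The crucial observation is that on the window $[\theta[m,n], n]$ the construction via $F$ uses \emph{only} $U_{\theta[m,n]}^n$ and the already-constructed symbols — it never consults the symbols before $\theta[m,n]$ — because by definition of $\mathcal{L}$ and $\theta[m,n]$ the reconstruction from $\theta[m,n]$ onward is self-contained. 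Hence both ${\bf X}$ and ${\bf Y}$, fed the same ${\bf U}$, agree on $[m,n]$ once we have passed a regeneration time, and since $\theta[m,n] > -\infty$ almost surely (Theorem \ref{theo1}), we conclude $\mathbb{P}(X_m^n \in \cdot) = \nu(Y_m^n \in \cdot)$ for every finite window $[m,n]$, whence $\mu = \nu$ by Kolmogorov's extension theorem.

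The main obstacle I anticipate is making the coupling in the uniqueness step fully rigorous, in particular justifying that an arbitrary stationary compatible ${\bf Y}$ can be consistently ``re-simulated'' on each window from its own past using the shared uniforms, and that the resulting reconstruction coincides with $Y_m^n$ almost surely rather than merely in distribution. This requires checking that compatibility \eqref{compatible} propagates through the iteration of $F$ — i.e. that for a compatible chain the conditional law of $Y_0$ given $Y_{-\infty}^{-1}$ can indeed be realized as $F(U_0, Y_{-\infty}^{-1})$ with $U_0$ uniform and independent — and then iterating this identification up from $\theta[m,n]$. Once the construction is seen to depend only on the finite segment $U_{\theta[m,n]}^n$ (which is where Theorem \ref{theo1} enters decisively), the distributional identity follows, but the measure-theoretic bookkeeping of the enlarged probability space and the independence structure is the delicate part.
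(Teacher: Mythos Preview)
Your proposal is correct and follows essentially the same route as the paper: compatibility via the update function $F$ and \eqref{lebe}, stationarity from the shift-equivariant factor-of-i.i.d.\ construction, and uniqueness from a coupling argument exploiting the a.s.\ finiteness of $\theta[m,n]$ (the ``loss of memory'' at regeneration times). The paper does not give a self-contained proof but refers to \citet{comets/fernandez/ferrari/2002} (Proposition~6.1 for existence, Corollary~4.1 for uniqueness), and your outline is precisely a fleshed-out version of that argument; the measure-theoretic bookkeeping you flag in the uniqueness step is exactly what those cited results handle.
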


Note that the $\epsilon$-regularity assumption for the symbols appearing in the context $w$ is weaker than the \emph{regularity} (also called \emph{strongly non-nullness}) assumption of the literature. And we know that this later condition neither implies existence nor uniqueness of the stationary measure \citep[see for example][]{bramson/kalikow/1993}. Our $\epsilon$-regularity conditions may rather be  compared to the weakly non-nulness assumption which requires $\sum_{a\in A}\inf_{v\in\tau}p(a|v)>0$ and which is  assumed for example by \cite{comets/fernandez/ferrari/2002}. In fact, this condition is very useful for our construction point of view: this allows us to have symbols which appear spontaneously, and makes the CFTP easier to perform.

The proof of Corollary \ref{coro1} using the CFTP algorithm and Theorem \ref{theo1} can be found in  \citet{comets/fernandez/ferrari/2002}  (Proposition 6.1 for the existence statement and Corollary 4.1 for the uniqueness statement). We omit these proofs in the present work in order to save space, but we mention the main lines. The existence statement follows once we observe that Theorem \ref{theo1} implies that one can construct a bi-infinite sequence ${\bf X}$ verifying for any $n\in \mathbb{Z}$, $X_{n}=F(U_{n},X_{-\infty}^{n-1})$. By (\ref{lebe}), this chain is therefore compatible in the sense of (\ref{compatible}). It is stationary by construction. The uniqueness statement follows from the loss of memory the chain inherits because of the existence of almost surely finite regeneration times.

\vspace{0.1cm}
We call time $t$ a regeneration  time for the chain ${\bf X}$ if  $\theta[t,+\infty]=t$.
Define the chain $\boldsymbol{\xi}$ on $\{0,1\}$ by $\xi_{j}:=\mathbbm{1}\G\{j=\theta[j,+\infty]\D\}$.
Then, consider the sequence of time indexes ${\bf T}$ defined such that $\xi_{j}=1$ if and only if $j=T_{l}$ for some $l$ in $\mathbb{Z}$, $T_{l}< T_{l+1}$ and with the convention $T_{0}\leq0<T_{1}$. We say that ${\bf X}$ has a regeneration scheme if the chain $\boldsymbol{\xi}$ is renewal (that is, if the  increments $(T_{i+1}-T_{i})_{i\in\mathbb{Z}}$ are independent, and are identically distributed for $i\neq0$).

\begin{coro}\label{coro2}\emph{(Regeneration scheme)}.
In the conditions of Theorem \ref{theo1} the chain ${\bf X}$ has a regeneration scheme. The random strings $(\XU_{T_{i}},\ldots,\XU_{T_{i+1}-1})_{i\neq0}$ are i.i.d. and have finite expected size.
\end{coro}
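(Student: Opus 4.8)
\textbf{Plan of proof for Corollary \ref{coro2}.}
The strategy is to exhibit the marked process $\boldsymbol{\xi}$ as a renewal process by showing that the regeneration times $\ldots<T_{-1}<T_0\leq 0<T_1<\ldots$ cut the chain ${\bf X}$ into blocks that are independent, and i.i.d.\ away from the index $0$. The key observation, already implicit in the construction, is that at a regeneration time $t$, i.e.\ when $\theta[t,+\infty]=t$, the whole future $\XU_t^{+\infty}$ is a deterministic function of $U_t,U_{t+1},\ldots$ alone: indeed $\mathcal{L}(U_t^{n})=1$ for every $n\geq t$ large enough, so the successive applications of the update function $F$ never return $\star$ and never reach below time $t$. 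Consequently, conditionally on the locations of the regeneration times, the increments $T_{i+1}-T_i$ and the reconstructed strings $(\XU_{T_i},\ldots,\XU_{T_{i+1}-1})$ depend on disjoint blocks of the i.i.d.\ sequence ${\bf U}$, hence are independent; and since $F$, $\mathcal{L}$ and hence the whole reconstruction rule are translation invariant, these blocks are identically distributed for $i\neq 0$ (the block straddling the origin is the only one whose law differs, because the event $\{T_0\leq 0<T_1\}$ is conditioned upon). I would first make precise, via (\ref{formaldef}) and the definition of $\mathcal{L}$, the measurability statement ``$\XU_t^{+\infty}=g(U_t,U_{t+1},\ldots)$ on $\{t=\theta[t,+\infty]\}$'', then deduce the strong-Markov-type decomposition at each $T_i$, and finally invoke stationarity (Corollary \ref{coro1}) to conclude the renewal property.

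For the finiteness of the expected block size, $\E(T_{i+1}-T_i)<\infty$ for $i\neq 0$, I would argue as follows. By stationarity of ${\bf X}$ and hence of $\boldsymbol{\xi}$, the density of regeneration points is $\rho:=\Prob(0=\theta[0,+\infty])=\Prob(\xi_0=1)$, and by the renewal/ergodic theorem $\E(T_2-T_1)=1/\rho$, so it suffices to show $\rho>0$. This is where the quantitative estimates behind Theorem \ref{theo1} enter: one shows that with probability bounded below by a positive constant, the reference string $w$ appears spontaneously (each of its $|w|$ symbols lands in the corresponding $J(\cdot|\emptyset)$, which has probability at least $\epsilon$ per symbol by $\epsilon$-regularity of $w$), and then, using the growth condition (\ref{rate-increasing}) on $\ell^w$, that starting from such a spontaneous occurrence the construction closes up towards $+\infty$ without ever needing a symbol strictly before time $0$. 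Concretely, $\{0=\theta[0,+\infty]\}\supseteq\{\mathcal{L}(U_0^{n})=1 \text{ for all } n\geq 0\}$, and the probability of the latter is positive by the same Borel--Cantelli / summability argument used to prove $\Prob(\theta[m,n]>-\infty)=1$ in Theorem \ref{theo1}, now run ``forward'' from a favourable spontaneous configuration at time $0$. Hence $\rho>0$ and $\E(T_{i+1}-T_i)=1/\rho<\infty$ for $i\neq 0$.

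The main obstacle, and the step deserving the most care, is the independence-of-blocks argument: one must verify rigorously that the reconstruction of $\XU_{T_i}^{T_{i+1}-1}$ uses \emph{only} the variables $U_{T_i},\ldots,U_{T_{i+1}-1}$ and nothing outside this block — in particular that the random endpoints $T_i$ are themselves measurable with respect to the right $\sigma$-algebras so that a clean conditioning is legitimate. This is delicate because $T_i=\theta[T_i,+\infty]$ is defined by a condition involving the entire future sequence $U_{T_i}^{+\infty}$, so the blocks are determined by a nested, look-ahead procedure rather than by a stopping time in the usual forward sense; the correct framing is that $\{j \text{ is a regeneration time}\}=\{j=\theta[j,+\infty]\}$ is an event in $\sigma(U_j,U_{j+1},\ldots)$ and that on this event the ``past before $j$'' is irrelevant, which decouples the block $[T_{i-1},T_i)$ from $[T_i,T_{i+1})$ at the common point $T_i$. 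Once this decoupling is established, translation invariance of the construction immediately gives the identical distribution of the non-central blocks, and Corollary \ref{coro1} guarantees everything is well defined under the stationary law $\mu$. (An alternative, which I would mention, is to simply quote the analogous argument in \cite{comets/fernandez/ferrari/2002}, since our regeneration times, though governed by the context-tree structure rather than an i.i.d.\ height variable, enjoy exactly the same ``cutting'' property.)
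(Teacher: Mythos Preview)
Your proposal is correct and, for the renewal/independence part, follows essentially the paper's own route: the paper makes the decoupling completely explicit via the identity
\[
\bigcap_{l=1}^{n}\bigl\{\theta[t_{l},+\infty]=t_{l}\bigr\}=\bigcap_{l=1}^{n}\bigl\{\theta[t_{l},t_{l+1}-1]=t_{l}\bigr\},
\]
each factor being $\mathcal{F}(U_{t_{l}}^{t_{l+1}-1})$-measurable, which is exactly the ``blocks depend on disjoint pieces of ${\bf U}$'' statement you describe informally. Your discussion of the measurability subtlety (that $\{j=\theta[j,+\infty]\}\in\sigma(U_{j},U_{j+1},\ldots)$ and that on this event the past is irrelevant) is the right diagnosis, and the identity above is precisely the clean formalisation you were looking for.

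Where you genuinely diverge from the paper is in the \emph{finite expected size} argument. You invoke Kac's formula $\E(T_{2}-T_{1})=1/\rho$ and then show $\rho=\Prob(\theta[0,+\infty]=0)>0$ via a forward Borel--Cantelli argument. The paper instead identifies the tail directly,
\[
\Prob(T_{l+1}-T_{l}\geq m)=\Prob(\theta[0]<-m+1),
\]
and uses the summability of the right-hand side already established in the proof of Theorem~\ref{theo1}. Both work; the paper's route gives a bit more (summable tails, not just finite mean) and recycles an estimate already in hand, while yours is conceptually lighter. For $\rho>0$ specifically, the paper's argument is sharper than your sketch: it bounds $\Prob(\theta[0,+\infty]=0)$ below by $\Prob(\bigcap_{i\geq1}\{D^{(0)}_{i}\geq1\})$, which is positive because state $0$ is transient for the auxiliary chain ${\bf D}^{(0)}$ (Proposition~\ref{elema}). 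This avoids having to rerun any Borel--Cantelli argument and connects directly to the machinery already built.
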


In words, this corollary states that the unique stationary chain compatible with $(\tau,p)$ under the  conditions of Theorem \ref{theo1} can be viewed as an i.i.d. concatenation of strings of symbols of $A$ having finite expected size. A similar result has been first obtained by \cite{lalley/1986} for one dimensional Gibbs states under appropriate conditions on the continuity rate, and then by \cite{comets/fernandez/ferrari/2002} under weaker conditions than the ones of \cite{lalley/1986}. 
It is an hidden regeneration scheme, because it uses the sequence ${\bf U}$. The main reason why we give this result is that it arises naturally from our perfect simulation approach. 

The visible regeneration scheme involves several technical complications, even if in spirit, it is similar to the preceding one. We postpone the precise definitions to Section \ref{proofoftheorem2}, and give the following simplified statement.

\begin{theo}\label{theo2}\emph{(Visible regeneration scheme)}.
Suppose $(\tau,p)$ satisfies the conditions of Theorem \ref{theo1}. Then, for $\mu$-a.s. realization of the chain ${\bf X}$ compatible with $(\tau,p)$ there exists a sequence of random times ${\bf T}^{{\bf X}}$ such that
\begin{itemize}
\item for any $i\in\mathbb{Z}$, the event $\{T_{i}^{{\bf X}}=k\}$ is measurable with respect to the $\sigma$-algebra generated by $X_{k}^{+\infty}$ and
\item conditionally on ${\bf T}$, the strings $(X_{T^{{\bf X}}_{i}},\ldots,X_{T^{{\bf X}}_{i+1}-1})_{i\neq0}$
are i.i.d. and have finite expected size.
\end{itemize}
\end{theo}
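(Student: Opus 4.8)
The plan is to bootstrap the \emph{visible} regeneration scheme from the \emph{hidden} one established in Corollary \ref{coro2}. Recall that Theorem \ref{theo1} gives a.s. finite regeneration times $\theta[t,+\infty]$ built from the auxiliary sequence ${\bf U}$; the times ${\bf T}$ are the ordered points where $\xi_j=1$, i.e. where $j=\theta[j,+\infty]$. The obstacle is that knowing $j=\theta[j,+\infty]$ requires knowing the $U_i$'s, which are \emph{not} functions of ${\bf X}$ alone, so these times are not measurable with respect to ${\bf X}$. The key observation to exploit is the structure of the reference string $w$: whenever $w$ is read off in a window of the realized chain at a position where, in addition, a ``spontaneous'' symbol of $\mathcal E$ was generated, the construction decouples. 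More precisely, I would isolate the event, depending only on $X_k^{+\infty}$, that starting from time $k$ one sees a block of $|w|$ consecutive spontaneous occurrences spelling out $w$ \emph{and} that every later context pointer in $[k,+\infty)$ lands to the right of $k$ — this is exactly the event that $k$ is a regeneration point that can be certified by looking only at the future of the chain.

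First I would make this precise by defining, for each $k$, the $\sigma(X_k^{+\infty})$-measurable event $R_k$ that (a) $X_{k}^{k+|w|-1}=w$, (b) for every $n\ge k+|w|$ the context $c_\tau(X_k^{n-1})$ used to produce $X_n$ is a proper suffix of $X_k^{n-1}$ of length $\le n-k$ (equivalently, the first occurrence of $w$ inside the relevant context never forces a look-back before time $k$), and — to get genuine independence of the \emph{past} — that the symbols $X_k^{k+|w|-1}$ were produced spontaneously, i.e. $U_{k+i}\in J(X_{k+i}|\emptyset)$ for $i=0,\dots,|w|-1$. The last condition is not $\sigma(X)$-measurable per se, but one shows that on $R_k$ defined purely from the future, the conditional law given $X_{-\infty}^{k-1}$ of the future already does not depend on $X_{-\infty}^{k-1}$, because any context relevant for $X_n$, $n\ge k$, has its defining occurrence of $w$ at distance $\le n-k$ and hence reads only symbols in $X_k^{n-1}$; this is precisely the content of the defining relation $|v|=m^w(v)+|w|+\ell^w(m^w(v))$ together with $\ell^w(m^w(v))<+\infty$. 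So $R_k$, though only a subset of $\{k=\theta[k,+\infty]\}$, still certifies a true renewal of the chain, and is ${\bf X}$-measurable.

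Next I would show $\Prob(R_k\text{ occurs for infinitely many }k<0\text{ and }k>0)=1$. This is where the rate condition \eqref{rate-increasing} re-enters: using $\epsilon$-regularity of $w$, the probability of the spontaneous block spelling $w$ at a given site is at least $\epsilon^{|w|}$, and the argument in the proof of Theorem \ref{theo1} (a renewal/comparison estimate controlling how far back a future context can reach, governed by $\ell^w$ and $C_\epsilon$) shows that the ``no look-back past $k$'' event has probability bounded below uniformly along a positive-density set of sites. A Borel–Cantelli / stationarity argument (the event $R_0$ has positive probability under $\mu$, and by ergodicity of ${\bf X}$ — which follows from the regeneration scheme of Corollary \ref{coro2} — the shifts $\sigma^k R_0$ occur infinitely often a.s.) then gives infinitely many such times on both sides of the origin. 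Let ${\bf T}^{\bf X}=(T_i^{\bf X})_{i\in\Z}$ enumerate them, with $T_0^{\bf X}\le 0<T_1^{\bf X}$; measurability of $\{T_i^{\bf X}=k\}$ with respect to $\sigma(X_k^{+\infty})$ is immediate once one checks that ``$R_j$ fails for $k<j<0$'' is also a function of the future of each such $j$, hence of $X_k^{+\infty}$, which holds because each $R_j$ depends on $X_j^{+\infty}$ and $j\ge k$.

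Finally, the i.i.d.\ and finite-expectation statements: conditionally on the positions ${\bf T}^{\bf X}$, the strings $(X_{T_i^{\bf X}},\dots,X_{T_{i+1}^{\bf X}-1})$ are independent because at each $T_i^{\bf X}$ the future of the chain is, by the decoupling established above, conditionally independent of everything strictly before $T_i^{\bf X}$ and distributed as the chain started afresh from a spontaneous copy of $w$; they are identically distributed for $i\ne 0$ by stationarity. For the finite expected length, I would dominate $T_{i+1}^{\bf X}-T_i^{\bf X}$ by the inter-occurrence time of the hidden regeneration times from Corollary \ref{coro2} intersected with the spontaneous-$w$ event — or more directly, note that $R_k$ has probability bounded below by a constant $c>0$ uniformly in $k$ by stationarity, so the gaps are stochastically dominated by a geometric($c$) variable, giving $\E(T_{i+1}^{\bf X}-T_i^{\bf X})\le 1/c<\infty$. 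The main obstacle, and the step requiring the most care, is the second one: showing that the purely future-measurable event $R_k$ genuinely severs the dependence on the past — one must verify that \emph{every} context invoked after time $k$ has its defining occurrence of $w$ within $[k,n-1]$, which is exactly where the combinatorial definition of the context tree via $m^w$ and $\ell^w$ does the work, and getting the uniform positive lower bound on $\Prob(R_k)$ under condition \eqref{rate-increasing} will reuse essentially the whole estimate behind Theorem \ref{theo1}.
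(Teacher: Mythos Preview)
Your proposal has the right architecture --- define a future-measurable event certifying that no later context reaches before $k$, show it has positive probability, and deduce a renewal structure --- and this is indeed how the paper proceeds. But there are two genuine gaps.

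First, your event $R_k$ asks only for a \emph{single} occurrence $X_k^{k+|w|-1}=w$. This can make $R_k$ empty. At time $n=k+|w|$ the relevant past ends with $w$, so $m^w=0$ and the context has length $|w|+\ell^w(0)$; your condition (b) at this first step demands this be $\le n-k=|w|$, i.e.\ $\ell^w(0)=0$. Nothing in the hypotheses of Theorem~\ref{theo1} forces $\ell^w(0)=0$ (e.g.\ the tree in Figure~\ref{fig:partition} has $\ell^w(0)=1$). The paper fixes this by requiring $\sigma:=\lceil \ell^w(|w|-1)/|w|\rceil+1$ consecutive copies of $w$ at the start, and checks explicitly that after $w^\sigma$ one can concatenate further $w$'s without looking before $k$; this bootstrapping step is what makes the visible event nonempty in general.

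Second, your argument for finite expected gap is wrong. You claim that $\Prob(R_k)\ge c>0$ uniformly in $k$ implies the gaps $T_{i+1}^{\bf X}-T_i^{\bf X}$ are stochastically dominated by a geometric$(c)$. But the events $R_k$ are far from independent in $k$ --- each depends on the entire future $X_k^{+\infty}$ --- so a uniform lower bound on their marginals says nothing about the tail of the inter-occurrence time. The paper instead shows $\mathbb P(\theta^{\bf X}[0]<-m)$ is summable in $m$ by reusing the comparison with the ${\bf Z}$-based time $\theta'[0,n]$ and the estimates of Section~\ref{proof of main theorem}; this is the step where condition~\eqref{rate-increasing} is actually needed, and it cannot be replaced by the geometric shortcut.

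A smaller point: your treatment of condition (c) is muddled --- you introduce it to get independence from the past, then declare $R_k$ to be ${\bf X}$-measurable, which it is not if (c) is included. The paper keeps the visible event purely in terms of ${\bf X}$ and proves independence of the blocks by exhibiting the factorization $\bigcap_l\{\theta^{\bf X}[t_l,+\infty]=t_l\}=\bigcap_l H[t_l,t_{l+1}-1]$ into events measurable on disjoint windows, together with a boundary condition $h[m,n]$ ensuring compatibility at the seams.
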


\begin{obs}[Monotonicity]\label{mono}
Suppose  the probabilistic context tree $(\tau,p)$ satisfies the conditions of the above results, then, all the above results hold true for any probabilistic context tree $(\tau',p')$ such that  $\tau'\leq\tau$ and for which $w$ is $\epsilon$-regular.
\end{obs}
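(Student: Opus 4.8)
The plan is to reduce everything to Theorem~\ref{theo1}. Corollary~\ref{coro1}, Corollary~\ref{coro2} and Theorem~\ref{theo2} are all stated \emph{conditionally} on the hypotheses of Theorem~\ref{theo1}, so it suffices to check that $(\tau',p')$ itself satisfies those hypotheses: an $\epsilon$-regular reference string $w$ together with the growth condition~(\ref{rate-increasing}) on the associated function. The $\epsilon$-regularity of $w$ for $(\tau',p')$ is part of the assumption of the observation, so the whole content is the \emph{geometric} claim that $w$ is a reference string for $\tau'$ and that the associated function ${\ell'}^{w}$ still satisfies~(\ref{rate-increasing}).

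First I would extract the elementary fact behind $\tau'\leq\tau$: if $v'\in\tau'$ then $v'$ is a suffix of some $v\in\tau$, and, as long as $v'$ still contains an occurrence of $w$, one has $m^{w}(v')=m^{w}(v)=:d$. Indeed the rightmost occurrence of $w$ in $v'$ is also an occurrence in $v$ at the same distance $d$ from the right end; conversely an occurrence of $w$ in $v$ at distance $d''<d$ would fall inside the last $|v'|$ symbols of $v$ (since $d''+|w|<d+|w|\leq|v'|$), hence be an occurrence in $v'$ at distance $d''<m^{w}(v')$, which is impossible. Since $\tau$ has reference string $w$, $|v|=d+|w|+\ell^{w}(d)$, and $|v'|\leq|v|$ gives $|v'|-d-|w|\leq\ell^{w}(d)$. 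Thus $\tau'$ is again of the form described in Section~\ref{ontheform}, with the same reference string $w$ and a function ${\ell'}^{w}\leq\ell^{w}$ pointwise. Finally $C_{\epsilon}=-\frac{1}{|w|}\log(1-\epsilon^{|w|})$ depends only on $|w|$ and $\epsilon$, which do not change, so $\limsup_{k}\frac{\log({\ell'}^{w}(k))}{C_{\epsilon}k}\leq\limsup_{k}\frac{\log(\ell^{w}(k))}{C_{\epsilon}k}<1$, and Theorem~\ref{theo1}, hence all the subsequent results, applies to $(\tau',p')$.

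The point that needs care — and the reason this comparison is better carried out through the shape of the trees than by a naive coupling — is that $p'$ may differ completely from $p$ on the (shorter) contexts of $\tau'$. Running Algorithm~1 for $(\tau,p)$ and for $(\tau',p')$ on the \emph{same} realisation of ${\bf U}$, the two constructions coincide only while the drawn symbols are spontaneous, i.e.\ while $U_{i}\in[0,\#\mathcal{E}\epsilon[$, and they may diverge the first time a non-spontaneous symbol is produced, so one cannot simply transport the regeneration time. What rescues the argument is that no context is available before an occurrence of $w$ has been produced, so up to and including the first occurrence of $w$ both constructions are forced to be spontaneous and therefore agree, while from there on the memory depth demanded by $\tau'$ never exceeds that demanded by $\tau$ — which is exactly the content of ${\ell'}^{w}\leq\ell^{w}$. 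Making this precise beyond the first occurrence of $w$, where the constructions may again diverge, is the genuinely delicate step; it is also what is needed to cover the one case not literally handled by the computation above, namely when some context of $\tau'$ contains no occurrence of $w$ at all and ${\ell'}^{w}$ is not defined as such, in which situation $\tau'$ demands only \emph{less} past than a reference-string tree, one still obtains $\theta[m,n]\leq\theta'[m,n]$, and the conclusion of Theorem~\ref{theo1} together with Corollaries~\ref{coro1}, \ref{coro2} and Theorem~\ref{theo2} is preserved.
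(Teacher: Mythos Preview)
The paper does not give an explicit proof of Observation~\ref{mono}; it is stated as a remark and then invoked in Subsection~\ref{simpli} to reduce the proof of Theorem~\ref{theo1} to the special case where $\ell^{w}$ is increasing and unbounded and the only infinite branches of $\tau$ are those avoiding $w$. The implicit justification is a lifting trick: given $(\tau',p')$ with $\tau'\leq\tau$, one defines $p''$ on the larger tree $\tau$ by $p''(\cdot\mid v):=p'(\cdot\mid c_{\tau'}(v))$ (well-defined since, for any past, $c_{\tau'}$ is a suffix of $c_{\tau}$). Then $(\tau,p'')$ satisfies the hypotheses of Theorem~\ref{theo1} verbatim --- same tree, same $\ell^{w}$, and $w$ $\epsilon$-regular because $p'$ makes it so --- while the chain compatible with $(\tau,p'')$ is the same as the one compatible with $(\tau',p')$. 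All the results therefore transfer at once.

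Your argument has the right ingredients but a gap in the middle. The assertion that ``$\tau'$ is again of the form described in Section~\ref{ontheform}, with a function ${\ell'}^{w}\leq\ell^{w}$'' is not true in general: two contexts $v'_{1},v'_{2}\in\tau'$ with $m^{w}(v'_{1})=m^{w}(v'_{2})$ may have different lengths, so there is no single function ${\ell'}^{w}$ of $m^{w}$ alone, and (as you yourself note) some contexts of $\tau'$ may contain no occurrence of $w$. What your computation actually yields is the \emph{inequality} $|v'|\leq m^{w}(v')+|w|+\ell^{w}(m^{w}(v'))$ whenever $w$ occurs in $v'$, and this is indeed all that the proof of Theorem~\ref{theo1} uses (through the random variables $L_{i}$ of Subsection~\ref{simpli}). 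But then you cannot simply re-invoke the \emph{statement} of Theorem~\ref{theo1}; you would have to re-enter its proof, which your third paragraph attempts only informally. The lifting above sidesteps this entirely, and it also dissolves the worry you raise about the two constructions on the same ${\bf U}$ diverging: there are not two chains to compare, only one chain described with a finer context tree.
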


\section{Proof of Theorem \ref{theo1}}\label{proof of main theorem}

A slight complication arises from the fact that the random variable $\theta$ depends on the values assumed by the chain ${\bf X}$ along its construction. In the first step of the proof, Subsections \ref{simpli}, \ref{exemplelegal}, \ref{simplicontinued} and \ref{exemplecool}, we define another random variable, we will denote $\bar{\theta}$ (see (\ref{bartheta})) and which has the following properties: (i) it only depends on the spontaneous occurrences of $w$   along the construction, (ii) it can be used to define a lower bound for $\theta$. 

In a  second step, Subsections \ref{2step} and \ref{2step2}, we relate the distribution of $\bar{\theta}$ with the probability of return to the state $0$ for an $\mathbb{N}$-valued auxiliary process which also depends on the spontaneous occurrences of $w$. At this point, there is a clear similarity with the proof of \cite{comets/fernandez/ferrari/2002}, the principal difference being that our auxiliary process is not the house of card process, but is defined through (\ref{key}). 

The proof of Theorem \ref{theo1} is finally given in Subsection \ref{laprueba}.


\subsection{Simplification of the problem}\label{simpli}

Suppose we are given a probabilistic context tree $(\tau,p)$ having an $\epsilon$-regular reference string $w=w_{-|w|}^{-1}$ to which corresponds the function $\ell^{w}$. Owning to Observation \ref{mono}, there is no  loss of generality in restricting the  proof to the case where (i) only the branches having $w$ as subsequence have finite length and (ii) $\ell^{w}$ increases and goes to infinity. Observe that this is for example the case of the context tree illustrated in Figure \ref{fig:partition3}.

We define a new stochastic chain ${\bf Z}$:  for any $i\in\mathbb{Z}$, $Z_{i}=a$  if $U_{i}$ belongs to $J(a|\emptyset)$, and  $Z_{i}=\star$ otherwise. This chain takes in account only the symbols which appear spontaneously in ${\bf X}$: $X_{i}=a$ whenever $Z_{i}=a$, and in particular $X_{i-|w|+1}^{i}=w$ whenever $Z_{i-|w|+1}^{i}=w$, for any $i\in\mathbb{Z}$. We also define the $\mathbb{N}$-valued random variables $m_{i}({\bf U})=m_{i}$ and $L_{i}({\bf U})=L_{i}$ as follows:
\[
m_{i}=\inf\G\{k\geq0:Z_{i-k-|w|}^{i-k-1}=w\D\}
\]
which is the distance to the last occurrence of $w$ in $Z_{-\infty}^{i-1}$, and 
\begin{equation}\label{k}
L_{i}=\left\{
\begin{array}{ccc}
0&\textrm{ if }&U_{i}\in [0,\#\mathcal{E}[,\\
m_{i}+|w|+\ell^{w}(m_{i}) &\textrm{ otherwise}.
\end{array}\right.
\end{equation}
The reason why we introduced these random variable is that if we have $\mathcal{L}(U_{m}^{n})=1$ for $-\infty<m<n<+\infty$ and $L_{n+1}\leq n-m+1$, then $L_{n+1}$  is an upper bound for the number of sites in the past we need to know in order to  decide the state at time $n+1$ using the perfect simulation algorithm. 
To see that, suppose that for some $-\infty<m<n<+\infty$ we have $\mathcal{L}(U_{m}^{n})=1$ and that $0<L_{n+1}\leq n-m+1$. Since 
\[
Z_{i-|w|}^{i-1}=w\Rightarrow X_{i-|w|}^{i-1}=w\,,\,\,\forall i\in\mathbb{Z}
\]
it follows that the distance to the last occurrence of $w$ in $Z_{m}^{n}$ is larger than in $X_{m}^{n}$ (recall the definition (\ref{mw}) of $m^{w}$):
\[
m_{n+1}\geq m^{w}(c_{\tau}(X_{m}^{n})).
\]
We also recall that for any $v\in\tau$  in which the string $w$ appears, we have
\begin{equation}\label{contextsize}
|v|=m^{w}(v)+|w|+\ell^{w}(m^{w}(v)),
\end{equation}
therefore, by definition (\ref{k}), whenever  $L_{n+1}>0$, $L_{n+1}$ is an upper bound for the size of the context needed at time $n+1$:
\[
|c_{\tau}(X_{m}^{n})|\leq L_{n+1}.
\]
Observe also that  $L_{n+1}=0$ if and only if the symbol appears spontaneously at time $n+1$.

\subsection{Example of Figure \ref{fig:lapartition}}\label{exemplelegal}
In this context tree, the symbol $2$ is the reference string and suppose that it is the only $\epsilon$-regular symbol. In this particular case, $|w|=1$ and  ${\bf Z}$ turns out to be an i.i.d. chain taking value $2$ with probability $\epsilon$, and $\star$ with probability $1-\epsilon$. Let us consider the random variable
\begin{equation}\label{bartheta1}
\theta^{|w|=1}[0,n]:=\max\big\{j\leq 0:L_{i}\leq i-j,\,\,i=j,\ldots, n\big\}.
\end{equation}
The best way to understand the utility of this new random variable is to exlpain that 
for any $n\geq0$, when we are in the set $\{{\bf U}:\theta^{|w|=1}[0,n]>-\infty\}$ we have
\begin{equation}\label{w1}
\theta^{|w|=1}[0,n]\leq \theta[0,n].
\end{equation}
To simplify the notation let us  write $\theta^{1}:=\theta^{|w|=1}[0,n]$. To each time $i\in\{\theta^{1},\ldots,n\}$, we associate an arrow going from time $i$ to time $i-L_{i}$. The definition of $\theta^{1}$ says that no arrows starting from $\{\theta^{1},\ldots,n\}$ go beyond time $\theta^{1}$. This means that the construction of $X_{\theta^{1}}^{n}$ can be performed recursively from time $\theta^{1}$ to time $n$ using only $U_{\theta^{1}}^{n}$, and therefore that 
%
$\mathcal{L}(U_{\theta^{1}}^{n})=1$. Since $\theta[0,n]$ is the maximum over $\{k\leq0:\mathcal{L}(U_{k}^{n})=1\}$, it follows that  (\ref{w1}) holds.

\subsection{Simplification of the problem (continued)}\label{simplicontinued}

To keep the same form as $\theta^{|w|=1}[0,n]$ for the general case where $|w|\geq1$, we introduce the time rescaled 
chain $\bar{{\bf Z}}$ defined by
\begin{equation}\label{barz}
\bar{Z}_{m}=\left\{
\begin{array}{ccc}
1&\textrm{ if }&U_{m|w|-i+1}\in J(w_{-i}|\emptyset),\,\,\,i=0,\ldots,|w|-1\\
\star &\textrm{ otherwise},
\end{array}\right.
\end{equation} 
and the function
\begin{equation}\label{barlbarh}
\bar{\ell}(i):=\G\lceil\frac{\ell^{w}\G((i+1)|w|-1\D)}{|w|}\D\rceil,
\end{equation}
where for any $r\in \mathbb{R}$, $\lceil r\rceil$ denotes the smallest integer number greater than or equal to $r$. Using these new definitions, we introduce the rescaled random variables
\[
\bar{m}_{i}=\inf\G\{k\geq0:\bar{Z}_{i-k-1}=1\D\}
\]
which is the distance to the last occurrence of $1$ in $\bar{Z}_{-\infty}^{i-1}$ and 
\begin{equation}\label{bark}
\bar{L}_{i}=\left\{
\begin{array}{ccc}
0&\textrm{ if }&\bar{Z}_{i}=1,\\
\bar{m}_{i}+1+\bar{\ell}(\bar{m}_{i}) &\textrm{ otherwise}.
\end{array}\right.
\end{equation}
We are now able to define our new random time:
\begin{equation}\label{bartheta}
\bar{\theta}[0,n]:=\max\big\{j\leq 0:\bar{L}_{i}\leq i-j,\,\,i=j,\ldots, n\big\}.
\end{equation}
Observe that in the case where $|w|=1$, this definition is equivalent to definition (\ref{bartheta1}).

\begin{figure}[htp]
\centering
\includegraphics{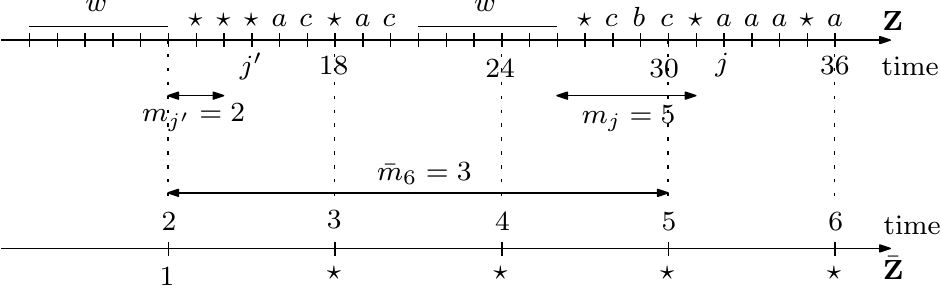}
\caption{Illustration of two coupled samples $Z_{12}^{36}$ and $\bar{Z}_{1}^{6}$ in a case where the reference string $w$ has size $6$, and $\mathcal{E}=\{a,b,c\}$. We have $m_{j}=5$, $m_{j'}=2$ and $\bar{m}_{6}=3$ and  $\bar{m}_{3}=0$. This Figure  illustrates the  inequalities (\ref{in1}) and (\ref{in2}).}
\label{fig:illustr}
\end{figure}

\begin{lemma}\label{matador}
For any $n\geq0$, we have in the set $\{{\bf U}:\bar{\theta}[0,n]>-\infty\}$ 
\[
\theta[0,n|w|]\geq|w|(\bar{\theta}[0,n]-1)+1.
\]
\end{lemma}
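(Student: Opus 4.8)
The strategy is to transfer the ``no arrow escapes $\bar\theta$'' property of the rescaled chain $\bar{\bf Z}$ back to the original chain $\bf Z$, using the correspondence $m\mapsto m|w|$ between rescaled time and original time. First I would fix a realization $\bf U$ in the event $\{\bar\theta[0,n]>-\infty\}$, write $\bar\theta:=\bar\theta[0,n]$ for brevity, and recall from the computation in Subsection~\ref{exemplelegal} (adapted to the rescaled setting) that $\bar L_i\le i-\bar\theta$ for all $i=\bar\theta,\dots,n$ means exactly that the construction of $\bar Z_{\bar\theta}^{n}$ never needs to look past time $\bar\theta$. The point is then to show that the window $[|w|(\bar\theta-1)+1,\,n|w|]$ in \emph{original} time is ``self-contained'' for the construction of $\bf X$, i.e.\ that $\mathcal L(U_{|w|(\bar\theta-1)+1}^{\,n|w|})=1$, which by the definition (\ref{formaldef}) of $\theta$ as a maximum immediately gives $\theta[0,n|w|]\ge |w|(\bar\theta-1)+1$.

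The heart of the argument is a pair of inequalities relating the original quantities $m_i, L_i$ to the rescaled ones $\bar m_{\lceil\cdot\rceil}, \bar L$, of the type illustrated in Figure~\ref{fig:illustr} (the inequalities (\ref{in1}) and (\ref{in2}) referenced there). Concretely: whenever $\bar Z_m=1$ we have $Z_{m|w|-|w|+1}^{\,m|w|}=w$ by the definition (\ref{barz}) of $\bar{\bf Z}$, so a spontaneous occurrence of $1$ in $\bar{\bf Z}$ at rescaled time $m$ forces a spontaneous occurrence of the whole reference string $w$ in $\bf Z$ ending at original time $m|w|$. Hence for any original time $i$, the distance $m_i$ to the last occurrence of $w$ is controlled by $|w|$ times the distance $\bar m_{\lceil i/|w|\rceil}$ to the last occurrence of $1$ in $\bar{\bf Z}$, up to an additive error of at most $|w|-1$ coming from the position of $i$ within its block; schematically $m_i \le |w|\,\bar m_{\lceil i/|w|\rceil} + (|w|-1)$. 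Combining this with the definition (\ref{k}) of $L_i$, the monotonicity of $\ell^w$ (assumed WLOG in Subsection~\ref{simpli}), and the definition (\ref{barlbarh}) of $\bar\ell$ via a ceiling — which is precisely engineered so that $\ell^w$ evaluated at the upper end of a block is dominated by $|w|\,\bar\ell$ of the block index — one gets $L_i \le |w|\,\bar L_{\lceil i/|w|\rceil}$ (for $i$ with $L_i>0$; the case $L_i=0$ being trivial). This is the step I expect to be the main obstacle: the bookkeeping between a time index $i$, its block $\lceil i/|w|\rceil$, and the off-by-$(|w|-1)$ boundary terms has to be done carefully so that the ceiling in $\bar\ell$ and the shift by $1$ in $|w|(\bar\theta-1)+1$ come out exactly right, and one must check the edge cases where a block straddles the end of the window.

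Granting the estimate $L_i\le |w|\,\bar L_{\lceil i/|w|\rceil}$, the conclusion is mechanical. Take any $i$ in the original window $\{|w|(\bar\theta-1)+1,\dots,n|w|\}$ and let $\lceil i/|w|\rceil = j \in\{\bar\theta,\dots,n\}$ (the shift by $|w|(\bar\theta-1)+1$ rather than $|w|\bar\theta$ is what guarantees $j\ge\bar\theta$ even for the first few indices of the block containing the left endpoint). Then, using $\bar L_j\le j-\bar\theta$,
\[
i - L_i \;\ge\; i - |w|\,\bar L_j \;\ge\; |w|(j-1)+1 - |w|(j-\bar\theta) \;=\; |w|(\bar\theta-1)+1,
\]
so no ``arrow'' from a time in the original window escapes below $|w|(\bar\theta-1)+1$. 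Exactly as in the $|w|=1$ argument of Subsection~\ref{exemplelegal}, this lets one carry out the recursive construction of $X_{|w|(\bar\theta-1)+1}^{\,n|w|}$ using only $U_{|w|(\bar\theta-1)+1}^{\,n|w|}$, i.e.\ $\mathcal L\big(U_{|w|(\bar\theta-1)+1}^{\,n|w|}\big)=1$. Since $|w|(\bar\theta-1)+1\le 0$ (as $\bar\theta\le 0$), the definition (\ref{formaldef}) of $\theta[0,n|w|]$ as the maximal such starting time yields $\theta[0,n|w|]\ge |w|(\bar\theta[0,n]-1)+1$, which is the claim.
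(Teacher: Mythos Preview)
Your plan is the paper's plan: compare $m_i$ to $\bar m_{\lceil i/|w|\rceil}$, push through to a comparison of $L_i$ with $\bar L$, conclude that no arrow from $\{|w|(\bar\theta-1)+1,\dots,n|w|\}$ escapes, hence $\mathcal L=1$ and the bound on $\theta$ follows. The argument is correct in structure, but the specific intermediate inequality you state,
\[
L_i\;\le\;|w|\,\bar L_{\lceil i/|w|\rceil},
\]
is a bit too strong and is false in general. Take $|w|=2$, $\ell^w(0)=\ell^w(1)=0$, and a realisation with $\bar Z_0=1$ (so $Z_{-1}^{0}=w$) but $Z_0^1\ne w$. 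For $i=2$, $j=\lceil i/|w|\rceil=1$ one has $m_2=1$, hence $L_2=1+2+\ell^w(1)=3$, while $\bar m_1=0$, $\bar\ell(0)=\lceil\ell^w(1)/2\rceil=0$, $\bar L_1=1$, so $|w|\bar L_1=2<3=L_2$.

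What \emph{is} true --- and is precisely the content of the chain (\ref{uuu}) in the paper, built on the finer inequality (\ref{in1}) rather than the coarse (\ref{in2}) you used for $m_i$ --- is the position-dependent bound
\[
L_i\;\le\;|w|\,\bar L_j\;+\;\bigl(i-(j-1)|w|-1\bigr),\qquad j=\lceil i/|w|\rceil,
\]
the extra term being the offset of $i$ inside its block. With this your final display becomes
\[
i-L_i\;\ge\;(j-1)|w|+1-|w|\,\bar L_j\;\ge\;(j-1)|w|+1-|w|(j-\bar\theta)\;=\;|w|(\bar\theta-1)+1,
\]
i.e.\ the $i$'s cancel rather than being bounded separately, and everything else you wrote goes through verbatim. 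You correctly flagged this bookkeeping as the delicate step; the fix is just to keep the offset $-(j|w|-i)$ from (\ref{in1}) rather than discarding it.
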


\noindent We refer to Subsection \ref{exemplecool} for an example illustrating this Lemma.

\begin{proof}
The way we defined $m_{i}$ and $\bar{m}_{i}$ implies that  for $j\in\{(i-1)|w|+1,\ldots,i|w|\}$,
\begin{equation}\label{in1}
m_{j}\leq (\bar{m}_{i}+1)|w|-1-(i|w|-j)
\end{equation}
\begin{equation}\label{in2}
\leq (\bar{m}_{i}+1)|w|-1.
\end{equation}
We refer to Figure \ref{fig:illustr} for a pictorial illustration of these inequalities.
It follows that, whenever $\bar{L}_{i}>0$, we have the following sequence of inequalities: for any $j$ in the set of sites $\{(i-1)|w|+1,\ldots,i|w|\}$ (recall that $\ell^{w}$ is increasing)
\begin{equation}\label{uuu}
\begin{array}{ccccc}
L_{j}&:=&m_{j}+|w|+\ell^{w}(m_{j})
\\
\vspace{0,1cm}
&\leq&m_{j}+|w|+\ell^{w}((\bar{m}_{i}+1)|w|-1)&\textrm{by inequality(\ref{in2})}
\\
\vspace{0,1cm}
&\leq& m_{j}+|w|+|w|\bar{\ell}(\bar{m}_{i})&\textrm{by definition (\ref{barlbarh})}
\\
\vspace{0,1cm}
&\leq&|w|-1-(i|w|-j)+|w|\bar{L}_{i}&\textrm{by inequality(\ref{in1})}.
\end{array}
\end{equation}
This indicates that, whenever $\bar{L}_{i}>0$, the arrow starting at time $j$ goes, at most, until time $|w|-1-(i|w|-j)+|w|\bar{L}_{i}$. In the case where $\bar{L}_{i}=0$, we have $L_{j}=0$ for $j\in\{(i-1)|w|+1,\ldots,i|w|\}$.
Denote $\bar{\theta}=\bar{\theta}[0,n]$,
the last line of  (\ref{uuu}) yields the following inclusion
\[
\bigcap_{i=\bar{\theta}}^{n}\{\bar{L}_{i}\leq i-\bar\theta\}\subset\bigcap_{i=\bar\theta}^{n} \bigcap_{j=(i-1)|w|+1}^{i|w|}\{L_{j}\leq j-|w|(\bar\theta-1)-1\},
\]
meaning that in the sequence $\bar{\bf Z}$, none of the arrows starting from the set of sites $\{|w|(\bar{\theta}-1)+1,\ldots,n|w|\}$ will pass time $|w|(\bar{\theta}-1)+1$. 

\end{proof}

\subsection{Example illustrating Lemma \ref{matador}}\label{exemplecool}

The lower part of Figure \ref{fig:combine} illustrates Lemma   \ref{matador} in a case where our reference string $w$ has size $|w|=3$, and $\mathcal{E}=\{a,b,c\}$. We represented two coupled samples $Z_{-38}^{6}$ and  $\bar{Z}_{-12}^{2}$. 
The subjacent sample $U_{-38}^{6}$ with which both sequences are coupled is not represented here. We use the function $\ell^{w}$ having the following values:
\[
\ell^{w}(0)=0,\,\ell^{w}(1)=0,\, \ell^{w}(2)=2,\,\ell^{w}(3)=2,\,\ell^{w}(4)=3,
\]
\[
\ell^{w}(5)=4,\,\ell^{w}(6)=7,\,\ell^{w}(7)=8,\,\ell^{w}(8)=12.
\]
The function   $\bar{\ell}$ has the following values, calculated with $\bar{\ell}(i):=\G\lceil\frac{\ell^{w}\G((i+1)|w|-1\D)}{|w|}\D\rceil$:
\begin{equation}\label{valuesbarl}
\bar{\ell}(0) =1,\,\bar{\ell}(1)= 2,\,\bar{\ell}(2)=4.
\end{equation}
The arrows starting at time $i$ represent $L_{i}$ in the upper sequence, and $\bar{L}_{i}$ in the lower one. The rectangle delimits the sample we want to construct. 
%
This example shows that we can use the sequence $\bar{\bf Z}$ to define the lower  bound $(\bar{\theta}[0,2]-1)|w|+1=38$ for $\theta[0,6]$. However, this lower bound is quite large, since we observe that the perfect simulation can be done from time $-5$ instead of time $-38$!
Even more, it is possible that the perfect simulation can be performed from time $-1$, but we cannot check this here, because we need the information of the context tree and of $U_{-1}^{6}$.
\begin{figure}[htp]
\centering
\includegraphics[scale=0.9]{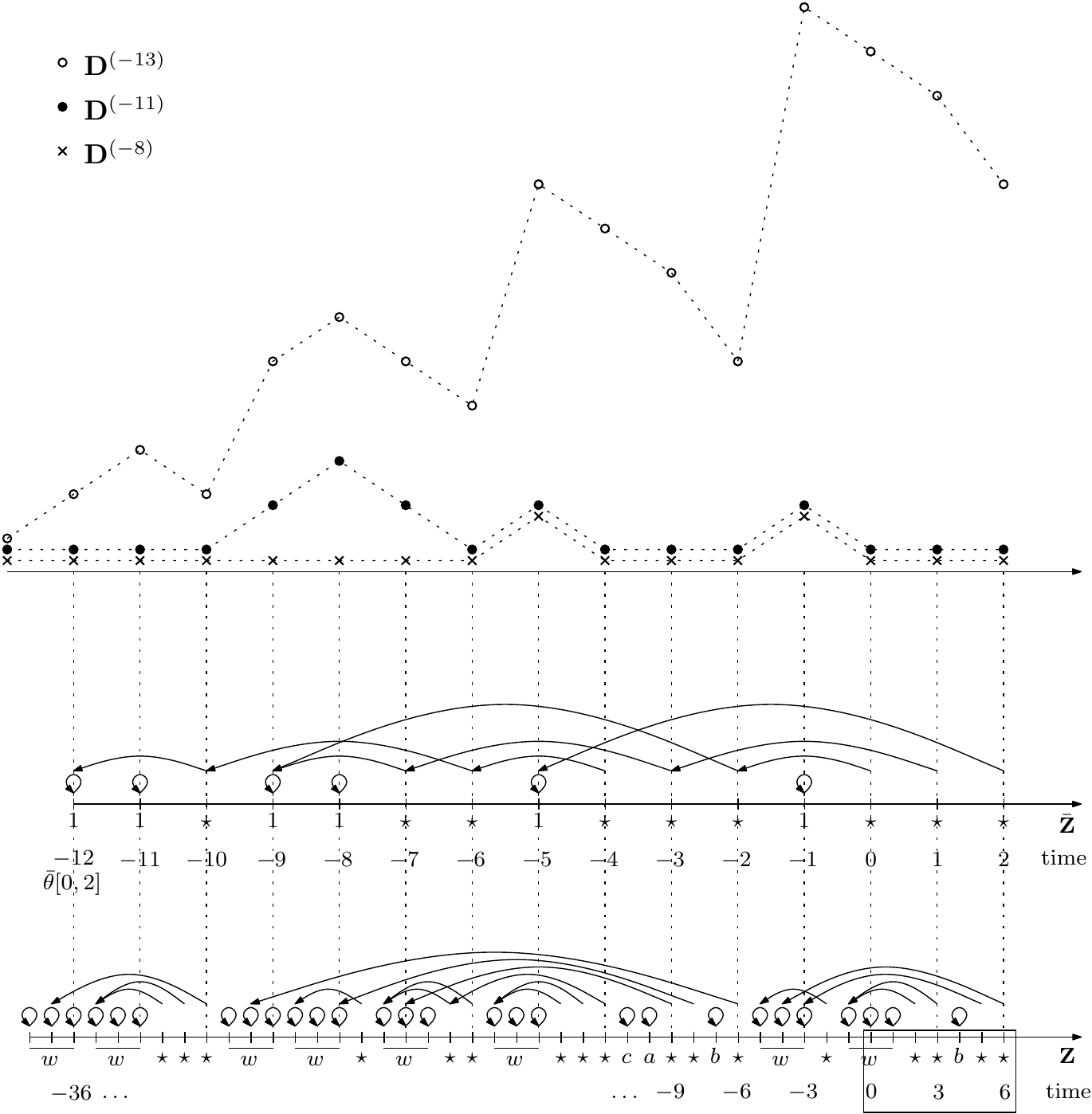}
\caption{Illustration of the random variable  $\bar{\theta}[0,2]$ constructed using $\bar{{\bf Z}}$, and of the behavior of the chains ${\bf D}^{(i)}$ for $i=-13,-11,-8$.}
\label{fig:combine}
\end{figure}


\subsection{An auxiliary process to study $\bar{\theta}[0,n]$}\label{2step}

For any $n\in\mathbb{Z}$, let us consider the $\mathbb{N}$-valued stochastic chain ${\bf D}^{(n)}$ defined by $D_{i}^{(n)}=0$ for any $i\leq n$ and
\begin{equation}\label{key}
D_{i}^{(n)}=(i-i^{(n)}-\bar{L}_{i})\vee 0\,\,,\,\,\,\forall i\geq n+1,
\end{equation}
where $i^{(n)}:=\max\{l< i:D_{l}^{(n)}=0\}$. 
We refer to Figure \ref{fig:combine} for an illustration of this process in the case of the example of Subsection \ref{exemplecool}. In this figure, we illustrated the samples $D^{(i)}_{-13},\ldots,D^{(i)}_{2}$ for $i=-8,-11,-13$, related to the sample $\bar{Z}_{-12}^{2}$.


As we informally mentioned in the introduction of the present Section, we introduce this new auxiliary stochastic chain in order to study the distribution of $\bar{\theta}$. The relation between $\bar{\theta}[0,n]$ and ${\bf D}^{(0)}$ is made clear by the next  Lemma.
 Let us mention that ${\bf D}^{(n)}$ has the same role as the house of cards process ${\bf W}^{n}$ introduced in  \citet[Section 5]{comets/fernandez/ferrari/2002}.

\begin{lemma}\label{seqgeq1}For any $l\geq0$ and $n\geq0$
\[
\Prob(\bar{\theta}[0,n]<-l)\leq\sum_{k= l+1}^{l+n+1}\Prob(D_{k}^{(0)}=0).
\]
\end{lemma}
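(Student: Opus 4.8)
The plan is to relate the event $\{\bar\theta[0,n]<-l\}$ to the behavior of the auxiliary chain $\mathbf{D}^{(0)}$ and then use a union bound over the possible locations of the ``last zero before time $0$'' for that chain. First I would unwind the definition (\ref{bartheta}): the event $\{\bar\theta[0,n]\geq -l\}$ means there exists some $j\in\{-l,\ldots,0\}$ such that no arrow starting in $\{j,\ldots,n\}$ passes time $j$, i.e. $\bar L_i\leq i-j$ for all $i=j,\ldots,n$. So $\{\bar\theta[0,n]<-l\}$ is the event that \emph{for every} $j\in\{-l,\ldots,0\}$ some arrow in $\{j,\ldots,n\}$ crosses time $j$. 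The idea is that this forces the chain $\mathbf{D}^{(0)}$ (which is built precisely by iterating the rule ``subtract $\bar L_i$ from the distance to the last zero, floored at $0$'') to be strictly positive throughout a whole block of times, so that $\mathbf{D}^{(0)}$ must have had its last zero, before time $0$, at some strictly negative index $-k$.

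The key step is the following deterministic (pathwise) claim: on $\{\bar\theta[0,n]<-l\}$, there exists $k\in\{l+1,\ldots,l+n+1\}$ with $D^{(0)}_{-k}=0$ and $D^{(0)}_{i}>0$ for $-k<i\le 0$ --- equivalently, $0^{(0)}=-k$ for some such $k$, where $0^{(0):}=\max\{m<0:D^{(0)}_m=0\}$. Actually, more useful is to phrase it the other way: if $D^{(0)}_{-k}=0$ then by the definition of $\mathbf{D}^{(0)}$ the index $-k$ is a ``fresh start'' and one can check that $\bar\theta[-k+1,n]\ge -k+1$ would follow if additionally there were no earlier crossing; carefully, one shows $\{D^{(0)}_{i}>0,\ \forall\, i\in(-k,0]\}$ together with $D^{(0)}_{-k}=0$ implies that $-k$ is exactly the last zero before $0$, and that on the complement of $\bigcup_{k=l+1}^{l+n+1}\{D^{(0)}_{-k}=0\}$ we must have a zero of $\mathbf{D}^{(0)}$ in $\{-l,\ldots,0\}$ (since $\mathbf{D}^{(0)}$ is zero for all indices $\le 0$... wait, no). The cleaner route: $D^{(0)}_i=0$ for all $i\le 0$ by definition (\ref{key}), so in fact for the window $[0,n]$ one should track where $\mathbf{D}^{(0)}$ returns to $0$ \emph{after} time $0$, not before. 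Re-examining: the relevant statement is that $\bar\theta[0,n]<-l$ forces $D^{(0)}_k>0$ for all $k\in\{1,\ldots,?\}$ up to a return... I would instead follow the Comets--Fernández--Ferrari template: show $\{\bar\theta[0,n]<-l\}\subseteq\bigcup_{k=l+1}^{l+n+1}\{D^{(0)}_k=0\}$ by arguing that if $D^{(0)}_k\ne 0$ for all $k$ in that range, one can read off from the positions of zeros of $\mathbf{D}^{(0)}$ before $l+1$ a valid candidate $j\ge -l$ witnessing $\bar\theta[0,n]\ge -l$. Then apply the union bound $\Prob(\bar\theta[0,n]<-l)\le\sum_{k=l+1}^{l+n+1}\Prob(D^{(0)}_k=0)$.

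Concretely, the steps in order: (1) restate (\ref{bartheta}) and (\ref{key}) side by side and observe that the recursion defining $D^{(n)}_i$ is ``$i$ minus the last zero minus $\bar L_i$, floored''; (2) prove the pathwise lemma that $D^{(0)}_k=0$ means no arrow from $\{k'<k\}$ below the previous zero reaches past $k$ — more precisely that zeros of $\mathbf{D}^{(0)}$ are candidate regeneration points for the rescaled problem; (3) show the inclusion of events $\{\bar\theta[0,n]<-l\}\subseteq\bigcup_{k=l+1}^{l+n+1}\{D^{(0)}_k=0\}$, using that $\bar\theta[0,n]<-l$ rules out any zero-candidate in $\{-l,\ldots,0\}$, hence (by shifting the window $\mathbf{D}^{(0)}$ runs over $[0,n]$, but zeros at $k\le l+n+1$ correspond to candidates $\ge -l$) forces a zero at some $k$ in the stated range; (4) conclude by subadditivity of $\Prob$.

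The main obstacle I anticipate is step (3): getting the index bookkeeping exactly right between the ``backward'' regeneration time $\bar\theta[0,n]$ (which looks for the latest $j\le 0$ with no bad arrow) and the ``forward'' auxiliary process $\mathbf{D}^{(0)}$ (which starts at $0$ and records overshoots of arrows beyond the previous zero). One has to be careful that a zero of $\mathbf{D}^{(0)}$ at time $k$ really does correspond to the possibility of regenerating at time $-k$ (or $-k+1$) for the window $[0,n]$, and that the range of $k$ giving zeros in $\{l+1,\ldots,l+n+1\}$ exactly covers candidates down to $-l$ while accounting for the window length $n+1$. The dependence structure (the event $\{D^{(0)}_k=0\}$ depends on $\bar L_i$ for $i\le k$, which depend on $U$'s over a finite window) is not needed for the union bound itself, so no independence argument is required here — it will matter only later when bounding $\Prob(D^{(0)}_k=0)$. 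I expect the inequality to be, in the end, essentially a relabeling of the identity $\{\bar\theta[0,n]<-l\}=\{$the chain $\mathbf{D}^{(0)}$ does not return to $0$ during $\{1,\ldots,l\}$ before covering the whole window$\}$, combined with a crude union bound, exactly mirroring Lemma-level arguments in \citet{comets/fernandez/ferrari/2002}.
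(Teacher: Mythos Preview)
Your proposal has a genuine gap: you are trying to establish a \emph{pathwise} (deterministic) inclusion
\[
\{\bar\theta[0,n]<-l\}\subseteq\bigcup_{k=l+1}^{l+n+1}\{D^{(0)}_k=0\},
\]
but no such inclusion can hold. The event on the left is measurable with respect to $(\bar L_i)_{-l\le i\le n}$, while the events on the right are measurable with respect to $(\bar L_i)_{1\le i\le l+n+1}$; these depend on different portions of the sequence $\bar{\bf Z}$. For a concrete check take $n=l=0$: then $\{\bar\theta[0,0]<0\}=\{\bar Z_0\ne 1\}$ while $\{D^{(0)}_1=0\}=\{\bar Z_1\ne 1\}$, two events of equal probability but supported on disjoint coordinates. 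Your own hesitation (``wait, no'' after recalling that $D^{(0)}_i=0$ for all $i\le 0$) is exactly the place where the argument breaks: the chain $\mathbf D^{(0)}$ carries no information whatsoever about what happens at nonpositive times, so it cannot witness the failure of regeneration candidates $j\in\{-l,\ldots,0\}$ pathwise.

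The paper's proof resolves this by working not with $\mathbf D^{(0)}$ alone but with the whole family $(\mathbf D^{(j)})_{j\le 0}$. First one obtains the \emph{pathwise identity}
\[
\{\bar\theta[0,n]<-l\}=\bigcup_{k=0}^{n}\{D^{(-l-1)}_k=0\},
\]
using two structural properties of the family: the monotonicity $D^{(n_1)}_i\ge D^{(n_2)}_i$ for $n_1\le n_2$, and the coalescence property that once $D^{(n)}_i=0$ the chains $\mathbf D^{(n)}$ and $\mathbf D^{(m)}$ agree for all later times and all $m\in[n,i]$. Only \emph{after} this identity is in hand does one pass to $\mathbf D^{(0)}$ by the translation invariance of $\bar{\bf Z}$, which gives $\Prob(D^{(-l-1)}_k=0)=\Prob(D^{(0)}_{k+l+1}=0)$ and hence the stated bound via a union over $k=0,\ldots,n$. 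The shift by $l+1$ is a distributional step, not a pathwise one; your outline conflates the two, and that is the missing idea.
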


\begin{proof}
It follows directly from the above definition and definition (\ref{bartheta}) of $\bar{\theta}$ that
\begin{equation}\label{stat1}
\G\{\bar{\theta}[0,n]<-l\D\}=\bigcap_{i=-l-1}^{-1}\bigcup_{k=i+1}^{n}\G\{D_{k}^{(i)}=0\D\}.
\end{equation}
We also observe that for any two integers $n_{1}\leq n_{2}$ 
\begin{equation}\label{domiprop1}
D^{(n_{1})}_{i}\geq D^{(n_{2})}_{i},\,\,\,\forall i\in\Z.
\end{equation}
Moreover, if  $D_{i}^{(n)}=0$ for some  $i>n$,  then $D_{j}^{(n)}=D^{(i)}_{j}$ for all $j\geq i$. These two facts imply that the sequence of chains  $({\bf D}^{(n)})_{n\in\mathbb{Z}}$ verifies
\begin{equation}\label{equationcoalescing1}
D^{(n)}_{i}=0\Rightarrow D^{(n)}_{k}=D^{(m)}_{k},\,\,\,\,\,\, \forall \,n\leq m\leq i\leq k.
\end{equation}
This is the \emph{coalescence} property of the sequence of chains $({\bf D}^{(n)})_{n\in\mathbb{Z}}$ illustrated on Figure \ref{fig:combine}.
Using first  (\ref{equationcoalescing1}), and then (\ref{domiprop1}), we obtain that for any $l\geq0$
\[
 \bigcap_{i=-l-1}^{-1}\bigcup_{k=i+1}^{n}\G\{D_{k}^{(i)}=0\D\}=\bigcap_{i=-l-1}^{-1}\bigcup_{k=0}^{n}\{D^{(i)}_{k}=0\}=\bigcup_{k=0}^{n}\{D^{(-l-1)}_{k}=0\}.
\]
It follows from (\ref{stat1}) that for any $l\geq0$ and $n\geq0$
\begin{equation}\label{uhuh}
\G\{\bar{\theta}[0,n]<-l\D\}=\bigcup_{k=0}^{n}\G\{D_{k}^{(-l-1)}=0\D\}.
\end{equation}
Therefore, using the translation invariance of  $\bar{{\bf Z}}$, we obtain
\[
\Prob(\bar{\theta}[0,n]<-l)\leq\sum_{k=0}^{n}\Prob(D_{k}^{(-l-1)}=0)=\sum_{k= l+1}^{l+n+1}\Prob(D_{k}^{(0)}=0).
\]
\end{proof}

\subsection{Study of $\Prob(D_{k}^{(0)}=0)$}\label{2step2}

Define the \emph{inverse} function of  $\bar{\ell}$ by
\begin{equation}\label{r1}
\bar{\ell}^{-1}(i)=\inf\{k\geq1:\bar{\ell}(k)>i\}\,\,,\,\,\forall i\geq0.
\end{equation}
The aim of this subsection is to prove the following Proposition.
\begin{prop}\label{elema}
Let ${\bf D}^{(0)}$ be the chain defined through (\ref{key}) using an $\mathbb{N}$-valued function $\bar{\ell}$ and the i.i.d. chain $\bar{\bf Z}$ on $\{1,\star\}$ with distribution $(\epsilon^{|w|},1-\epsilon^{|w|})$. Then, the sequence $u_{k}:=\mathbb{P}(D^{(0)}_{k}=0)$ 
\begin{enumerate}
\item is summable when $(1-\epsilon^{|w|})^{\bar{\ell}^{-1}(i)}$  is summable,
\item decreases exponentially fast when $(1-\epsilon^{|w|})^{\bar{\ell}^{-1}(i)}$ decreases exponentially fast.
\end{enumerate}
\end{prop}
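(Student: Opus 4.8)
The plan is to study the renewal-type structure of the chain ${\bf D}^{(0)}$ directly. First I would observe that $D^{(0)}_k=0$ exactly when, starting from the last zero $k^{(0)}$ strictly before $k$, the ``excursion'' of the house-of-cards-like recursion returns to $0$ at time $k$; more precisely, $D^{(0)}_k=0$ iff either $\bar Z_k=1$ (so $\bar L_k=0$) or $k-k^{(0)}\le \bar L_k=\bar m_k+1+\bar\ell(\bar m_k)$ fails the strict inequality — in short, the zero-set of ${\bf D}^{(0)}$ is a renewal process, and I would express $u_k=\Prob(D^{(0)}_k=0)$ via the renewal equation $u_k=\sum_{j=1}^{k} u_{k-j}\, r_j$ with $u_0=1$, where $r_j=\Prob(\text{first return of }{\bf D}^{(0)}\text{ to }0\text{ takes }j\text{ steps})$. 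By the renewal theorem (discrete, e.g.\ the Erd\H{o}s–Feller–Pollard theorem), $\sum_k u_k<\infty$ iff the return-time distribution $(r_j)$ is defective ($\sum_j r_j<1$), and $u_k$ decays exponentially iff $(r_j)$ has an exponential tail and is defective, so the whole Proposition reduces to estimating the tail of $(r_j)$ in terms of $(1-\epsilon^{|w|})^{\bar\ell^{-1}(i)}$.

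Next I would bound the return time from above by the time needed for a fresh occurrence of the symbol $1$ in $\bar{\bf Z}$ to "dominate" the growing arrow. Concretely: starting from a zero at time $0$, if the next $1$ in $\bar{\bf Z}$ occurs at distance $m$ (probability $(1-\epsilon^{|w|})^{m}\epsilon^{|w|}$), then at the sites before it the arrow length is $\bar L_i=\bar m_i+1+\bar\ell(\bar m_i)$ with $\bar m_i<m$; the recursion fails to return to $0$ before that only while the accumulated height stays positive, and a crude but sufficient bound is that the excursion length is at most of order $m+\bar\ell(m)$. I would use the inverse function: $\bar\ell(\bar m_i)\le i$ precisely when $\bar m_i<\bar\ell^{-1}(i)$, which lets me show that if the block of consecutive $\star$'s has length $< \bar\ell^{-1}(j)$ then the arrows within it cannot reach back past the block's start by time $j$, hence $D^{(0)}_j=0$. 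This gives a lower bound $u_j\ge \Prob(\text{no run of }\star\text{ of length }\ge \bar\ell^{-1}(j)\text{ in a window})$, and symmetrically an upper bound on the complement showing $r_j\le C(1-\epsilon^{|w|})^{\bar\ell^{-1}(cj)}$ for suitable constants. Summability or exponential decay of $(1-\epsilon^{|w|})^{\bar\ell^{-1}(i)}$ then transfers to $(r_j)$, and defectiveness follows because with positive probability the arrow from the first non-spontaneous site already overshoots (so the excursion never returns), or more simply from the strict inequality $\sum r_j<1$ forced by the chance of an immediate $1$.

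The main obstacle I expect is the bookkeeping in the middle step: controlling a single excursion of ${\bf D}^{(0)}$, which is not the house-of-cards chain but the variant defined by (\ref{key}) in which the "reset point" $i^{(n)}$ is the last zero and the decrement $\bar L_i$ itself depends (through $\bar m_i$) on the location of the last $1$ in $\bar{\bf Z}$, not on the last zero of ${\bf D}$. So the coupling between "runs of $\star$ in $\bar{\bf Z}$" and "excursions of ${\bf D}^{(0)}$" is not exact, and I would need a clean comparison lemma: within any excursion, $\bar m_i$ is bounded by the elapsed time since the last $1$, which is bounded by the elapsed time since the last zero of ${\bf D}^{(0)}$ plus a bounded overshoot, so that the excursion length is dominated by a functional of a single geometric run-length $m$ composed with $\bar\ell$. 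Once that domination is in place, parts (1) and (2) are immediate applications of the discrete renewal theorem together with the hypothesis on $(1-\epsilon^{|w|})^{\bar\ell^{-1}(i)}$; I would also note that the condition (\ref{rate-increasing}) of Theorem \ref{theo1}, after translating $\ell^w$ to $\bar\ell$ and inverting, is exactly what makes $(1-\epsilon^{|w|})^{\bar\ell^{-1}(i)}$ summable (indeed exponentially small), closing the loop with the hypotheses actually used later.
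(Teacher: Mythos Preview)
Your renewal-equation framework is the same as the paper's: both observe that $0$ is a regenerative state for ${\bf D}^{(0)}$, write $u_k=\sum_{j\le k} f_j u_{k-j}$, and reduce part~(1) to transience ($\sum_j f_j<1$) and part~(2) to exponential tails of $(f_j)$ together with transience, via the generating-function identity $U(s)=1/(1-F(s))$.

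The gap is in your argument for defectiveness. Both reasons you give are wrong, and in fact point in the opposite direction. First, ``the chance of an immediate~$1$'' does nothing: if $\bar Z_1=1$ then $\bar L_1=0$ and $D_1^{(0)}=1$, but this in no way prevents a later return to~$0$; conversely, if $\bar Z_1=\star$ then $\bar L_1=\bar m_1+1+\bar\ell(\bar m_1)\ge 1$ and hence $D_1^{(0)}=0$, so $f_1=\Prob(\bar Z_1=\star)=1-\epsilon^{|w|}$ is already a large chunk of mass. Second, ``the arrow from the first non-spontaneous site overshoots'' is exactly the event that causes a return: $D_i^{(0)}=0$ precisely when $\bar L_i\ge i-i^{(0)}$, i.e.\ when the arrow reaches back past the last zero. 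So an overshooting arrow produces $\zeta<\infty$, not $\zeta=\infty$. Your tail bound $f_j\le C(1-\epsilon^{|w|})^{\bar\ell^{-1}(cj)}$ is plausible (and useful for part~(2)), but summability of the right-hand side gives no information on whether $\sum_j f_j$ equals~$1$ or is strictly less.

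The paper's device for transience is to condition on the event $\{\bar Z_1^M=1^M\}$, which forces $D_M^{(0)}=M$; on this event a return to~$0$ ever occurring requires, for some $i\ge M$, a run of $\star$'s in $\bar{\bf Z}$ of length at least $\bar\ell^{-1}(i-1)$ starting at time $i+1$. This yields
\[
\Prob(\zeta<\infty)\;\le\;\epsilon^{M|w|}\sum_{i\ge M-1}(1-\epsilon^{|w|})^{\bar\ell^{-1}(i)}\;+\;1-\epsilon^{M|w|},
\]
and taking $M$ large enough that the tail sum is $<1$ gives $\Prob(\zeta<\infty)<1$. This is the missing idea; once you have it, the rest of your plan for part~(2) (exponential tail of $f_j$ plus $F(1)<1$ implies a radius of convergence $>1$ for $F$, hence for $U$) is exactly what the paper does.
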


\begin{proof}
Here we use  the proof given in \citep[][Proposition 2]{bressaud/fernandez/galves/1999a} for the house of cards process. 
Denote by $\zeta$ the first time larger than $0$ such that ${\bf D}^{(0)}$ touches $0$, and by $f_{k}$ the probability $\Prob(\zeta=k)$. 
First of all, we observe that the state $0$ is a renewal state for the chain ${\bf D}^{(0)}$. It follows that 
the sequences $(u_{k})_{k\geq1}$ and $(f_{k})_{k\geq1}$ satisfy
\begin{equation}\label{renewaleq}
u_{k}=\sum_{i=1}^{k}f_{i}u_{k-i}.
\end{equation} 
By (\ref{renewaleq}), the series
\[
F(s):=\sum_{n\geq1}f_{n}s^{n}\,\,\,\textrm{and}\,\,\,U(s):=\sum_{n\geq1}u_{n}s^{n}
\]
are related through
\begin{equation}\label{ooo}
U(s)=\frac{1}{1-F(s)}
\end{equation}
for $s\geq1$ such that $F(s)<1$ \citep[see for example][chap. XIII.10, Theorem 1]{feller/1968}.
In order to prove statement (1), all we need to prove is that the state $0$ is transient, that is $F(1)<1$, whenever $(1-\epsilon^{|w|})^{\bar{\ell}^{-1}(i)}$ is summable. 
%
Suppose that for some $M>0$ we have $\bar Z_{1}^{M}=1^{M}$ so that in particular $D_{M}^{(0)}=M$.
The first possible arrow which can go until or further time $0$ could be the one of  $M+\bar{\ell}^{-1}(M-1)$. This follows from the definition (\ref{r1}) of $\bar{\ell}^{-1}(M-1)$.
 Then, in order that the chain ${\bf D}^{(n)}$ touches $0$ at the first possible time after $M$, it is necessary that $\bar{\ell}^{-1}(M-1)$ stars appear in $\bar{\bf Z}$ from time $M+1$ to time $M+\bar{\ell}^{-1}(M-1)$. 
This is made clear by Figure \ref{fig:combine}. More specifically, we have for any integer $M\geq1$
\begin{equation}\label{yaman}
\bigcup_{i\geq1}\{\zeta=i\}\cap\{\bar Z_{1}^{M}=1^{M}\}=\bigcup_{i\geq M}\{\bar{Z}_{i+1}^{i+\bar{\ell}^{-1}(i-1)}=\star^{\bar{\ell}^{-1}(i-1)}\}\cap\{\bar Z_{1}^{M}=1^{M}\}.
\end{equation}
It follows that using the partition
\[
\bigcup_{i\geq1}\{\zeta=i\}=\bigcup_{i\geq1}\{\zeta=i\}\cap\{\bar Z_{1}^{M}=1^{M}\}\cup \bigcup_{i\geq1}\{\zeta=i\}\cap\{\bar Z_{1}^{M}\neq1^{M}\},\,\,\,\,\forall M\geq1
\]
one obtains the following simple upper bound:
\[
\Prob\Big(\bigcup_{i\geq1}\{\zeta=i\}\Big)\leq \Prob\Big(\bigcup_{i\geq M}\{\bar{Z}_{i+1}^{i+\bar{\ell}^{-1}(i-1)}=\star^{\bar{\ell}^{-1}(i-1)}\}\cap\{\bar Z_{1}^{M}=1^{M}\}\Big)+\Prob\Big(\bar Z_{1}^{M}\neq1^{M}\Big).
\]
The events $\bigcup_{i\geq M}\{\bar{Z}_{i+1}^{i+\bar{\ell}^{-1}(i-1)}=\star^{\bar{\ell}^{-1}(i-1)}\}$ and $\{\bar Z_{1}^{M}=1^{M}\}$ are independents
 since the $\bar Z_{i}$'s are i.i.d. Therefore, for any $M\geq1$
\[
\Prob\Big(\bigcup_{i\geq1}\{\zeta=i\}\Big)\leq\epsilon^{M|w|}\sum_{i\geq M-1}(1-\epsilon^{|w|})^{\bar{\ell}^{-1}(i)}+1-\epsilon^{M|w|}.
\]

If $\sum_{i\geq0}(1-\epsilon^{|w|})^{\bar{\ell}^{-1}(i)}<+\infty$  we can take $M$ sufficiently large to ensure  that $\sum_{i\geq M-1}(1-\epsilon^{|w|})^{\bar{\ell}^{-1}(i)}<1$. Thus $\sum_{i\geq0}(1-\epsilon^{|w|})^{\bar{\ell}^{-1}(i)}<+\infty$ implies $\sum_{i\geq1}f_{i}<1$, concluding the proof of statement (1). 

For the proof of statement (2),  let us suppose that $(1-\epsilon^{|w|})^{\bar{\ell}^{-1}(i)}$ decreases exponentially fast. Then on the one hand  ${\bf D}^{(0)}$ is transient and therefore $F$ and $U$ are related through (\ref{ooo}), and on the other hand  $\bar{\ell}^{-1}(i)\sim i$.
It follows that
\[
f_{n}=\mathbb{P}(D^{(0)}_{i}>0\,,\,\,i=1,\ldots \frac{n}{2}-1)\epsilon^{|w|} (1-\epsilon^{|w|})^{n/2},
\]
and $\frac{f_{n}}{(1-\epsilon^{|w|})^{n/2}}\stackrel{n\rightarrow+\infty}{\longrightarrow}\mathbb{P}(\zeta=+\infty)>0$.
Thus, the radius of convergence of $F$ is 
\[
\lim_{n\rightarrow+\infty}\left((1-\epsilon^{|w|})^{n/2}\right)^{-1/n}=(1-\epsilon^{|w|})^{-1/2}
\] 
which is strictly larger than $1$. Since $F(1)=\mathbb{P}(\zeta<+\infty)<1$, it follows that by continuity, there exists a real number $s_{0}>1$ such that $F(s_{0})=1$. By (\ref{ooo}), this means that $U(s)<+\infty$ for $s<s_{0}$, and by the definition of $U$, it implies that $u_{n}$ decreases faster than $r^{n}$ for $r\in(s_{0}^{-1},1)$.
\end{proof}


\subsection{Proof of the theorem}\label{laprueba}

The proof of the theorem is simple now. 
%
We will show that for any $-\infty< n\leq+\infty$, $\Prob(\theta[0,n]<-l)$ converges to $0$ when $l$ diverges.
Since
\[
\Prob(\theta[0,n]<-l)\leq\Prob\G(\theta\G[0,|w|\G\lceil \frac{n}{|w|}\D\rceil\D]<-|w|\G\lfloor\frac{l}{|w|}\D\rfloor\D)
\]
where $\lfloor r\rfloor$ denotes the integer part of $r$, one obtains by Lemmas  \ref{matador} and \ref{seqgeq1}
\begin{equation}\label{yo}
\Prob(\theta[0,n]<-l)\leq\sum_{k=\G\lfloor\frac{l}{|w|}\D\rfloor }^{\G\lfloor\frac{l}{|w|}\D\rfloor+\G\lceil \frac{n}{|w|}\D\rceil}u_{k}
\end{equation}
for any  $l$ such that $\G\lfloor\frac{l}{|w|}\D\rfloor\geq1$. 
In the conditions of Theorem  \ref{theo1}, we have 
\[
\limsup_{k\rightarrow\infty}\frac{\log\ell^{w}(k)}{\log\left(1/(1-\epsilon^{|w|})\right)^{k/|w|}}<1, 
\]
therefore, there exists a  real number  $\alpha>0$ such that  for any $k$ sufficiently large $\ell^{w}(k)\leq\left(\frac{1}{1-\epsilon^{|w|}}\right)^{k/[|w|(1+\alpha)]}$.
It follows that  $\bar{\ell}(k)\leq\G(\frac{1}{1-\epsilon^{|w|}}\D)^{(k+1)/(1+\alpha)}$ for  $k$ sufficiently large
and by the definition (\ref{r1}) of $\bar{\ell}^{-1}$,  $\bar{\ell}^{-1}(n)\geq \frac{1+\alpha}{\log \frac{1}{1-\epsilon^{|w|}}}\log n-1$ for any $n$ sufficiently large. Therefore there exists $n^{\star}$ such that
\[
\sum_{n\geq0}(1-\epsilon^{|w|})^{\bar{\ell}^{-1}(n)}\leq\sum_{n=0}^{n^{\star}-1}(1-\epsilon^{|w|})^{\bar{\ell}^{-1}(n)}+(1-\epsilon^{|w|})^{-1}\sum_{n\geq n^{\star}}n^{-1-\alpha},
\]
which is finite since $\alpha$ is strictly positive.
By Proposition \ref{elema}, this implies   that $(u_{k})_{k\geq1}$ is summable. It follows by (\ref{yo}) that $\mathbb{P}(\theta[0]<-l)$ is summable in $l$, and  that $\Prob(\theta[0,n]<-l)$ goes to zero when $l$ diverges, for any $n$, $0\leq n\leq+\infty$. 
This concludes the proof of Theorem \ref{theo1}.




\section{Proof of Corollary \ref{coro2}}\label{provatheo1.5}

We refer to  \citep[Section 8]{comets/fernandez/ferrari/2002} for a complete proof of the statements of Corollary \ref{coro2}. The proofs given therein go mainly along the following lines.

\subsection{Existence of a regeneration scheme}
On the one hand, we have that $\Prob(\theta[0,+\infty]=0)>0$, which follows from the fact that
\[
\Prob(\theta[0,+\infty]=0)\geq\Prob(\cap_{i\geq1}\{D^{(0)}_{i}\geq1\}),
\]
which is strictly positive, since the state $0$ is transient in the conditions of Theorem \ref{theo1} (this is shown in the proof of Proposition \ref{elema}).
On the other hand, we have to check that the chain ${\bf \xi}$, defined by $\xi_{j}:=\mathbbm{1}\G\{j=\theta[j,+\infty]\D\}$, is renewal. This follows  from the fact that by the definition (\ref{formaldef}) of the random variable $\theta$, we have
\[
\bigcap_{l=1}^{n}\G\{\theta[t_{l},+\infty]=t_{l}\D\}=\bigcap_{l=1}^{n}\G\{\theta[t_{l},t_{l+1}-1]=t_{l}\D\}
\]
(where we used the convention $t_{n+1}=+\infty$) which is an intersection of independent events, since $\G\{\theta[t_{l},t_{l+1}-1]=t_{l}\D\}$ 
is $\mathcal{F}\G(U_{t_{l}}^{t_{l+1}-1}\D)$-measurable, for $l=1,\ldots,n$. 
To conclude, the fact that the random strings $(\XU_{T_{i}},\ldots,\XU_{T_{i+1}-1})_{i\neq0}$ are i.i.d. follows from the construction using Algorithm 1 or 2.
\subsection{Finite expected size}
To show that the expected size between two consecutive $1$'s in  $\boldsymbol{\xi}$ is finite, we observe that
by stationarity and definition (\ref{formaldef}) of the random variable $\theta$
\[
\mathbb{P}(T_{l+1}-T_{l}\geq m)=\mathbb{P}(\theta[1,+\infty]\leq-m|\theta[0,+\infty]=0)=\mathbb{P}(\theta[0]<-m+1),
\]
which has been proved to be summable in Subsection \ref{laprueba}.

\section{Proof of Theorem \ref{theo2}}\label{proofoftheorem2}

Suppose $(\tau,p)$ satisfies the conditions of Theorem \ref{theo1}. Denote by ${\bf X}$ the unique stationary chain which has been constructed with Algorithm 1 or 2. Define the random variable $L_{i}^{\bf X}:=|c_{\tau}(X_{-\infty}^{i-1})|$
and  denote $\sigma:=\lceil\frac{\ell^{w}(|w|-1)}{|w|}\rceil+1$. For any integers $m,n$ such that $-\infty<m+\sigma|w|\leq n\leq+\infty$ the visible regeneration time of the window $[m,n]$ is 
\begin{equation}\label{thetaX}
\theta^{{\bf X}}[m,n]:=\max\{k\leq t:X_{k}^{k+\sigma|w|-1}=w^{\sigma}\,\textrm{ and }L^{\bf X}_{i}\leq i-k,\,\,i=k+\sigma|w|,\ldots,n\}.
\end{equation}
Observe that although $L_{i}^{\bf X}:=|c_{\tau}(X_{-\infty}^{i-1})|$, the event $\{\theta^{{\bf X}}[m,n]=k\}$ is measurable with respect to the $\sigma$-algebra generated by $X_{k}^{n}$.
To ask for $X_{k}^{k+\sigma|w|-1}=w^{\sigma}$ ensures that there exist realizations of ${\bf X}$ such that $\theta^{\bf X}[m,n]>-\infty$. To see that, observe that we can concatenate one more $w$ to these $\sigma$ consecutive $w$'s without needing to know more than $w^{\sigma}$: for $i=0,\ldots,|w|-2$
\begin{equation}
\begin{array}{cccc}
|c_{\tau}(w^{\sigma-1}\,w)|\leq|c_{\tau}(w^{\sigma-1}\,w\, w_{-|w|}\ldots w_{-|w|+i})|&=&\ell^{w}(i+1)+|w|+i+1\\
&\leq&\ell^{w}(|w|-1)+|w|+i+1\\
& \leq& \sigma|w|+i+1.
\end{array}
\end{equation}

We say that time $t$ is a \emph{visible regeneration time} for the chain ${\bf X}$  if $\theta^{{\bf X}}[t+\infty]=t$.
Finally, define $\boldsymbol{\xi}^{{\bf X}}$ and  ${\bf T}^{{\bf X}}$ using $\theta^{{\bf X}}[t,+\infty]$ in the same way we defined $\boldsymbol{\xi}$ and ${\bf T}$ using $\theta[t,+\infty]$.  What we want to show is that (i) $\theta^{\bf X}[0,+\infty]$ is almost surely finite and (ii) $\boldsymbol{\xi}^{{\bf X}}$ is renewal, with finite expected distance between two consecutive $1$'s. 

We use the sequence ${\bf Z}$ which tells us where we are sure that $w$ occurs in ${\bf X}$. 
The proof of item (i)  is quite similar to the proof of Theorem \ref{theo1}. The main difference 
is in the following new definitions:
\[
L'_{i}:=m_{i}+|w|+\ell^{w}(m_{i}),
\] 
which is always strictly larger than $0$, and for any $n\geq \sigma|w|$
\begin{equation}\label{defthetaprime}
\theta'[0,n]:=\max\{k\leq 0:Z_{k}^{k+\sigma|w|-1}=w^{\sigma}\,\textrm{ and }\,\,L'_{i}\leq i-k\,\,\,\textrm{for}\,\,i=k+\sigma|w|,\ldots,n\}.
\end{equation}
%


We can use the proofs given in Section \ref{proof of main theorem} to show that $\theta'[0,n]\leq\theta^{\bf X}[0,n]$, that $\mathbb{P}(\theta'[0,n]<-l)$ goes to zero as $l$ diverges, for any $n\leq+\infty$ and that $\mathbb{P}(\theta'[0]<-l)$ is summable in $l$.
In order to prove item (ii), we can adapt the proof of \cite{comets/fernandez/ferrari/2002}. Let us denote $v:=w^{\sigma}$ and define for any $-\infty<m\leq n<+\infty$ the events
\[
h[m,n]:=\bigcap_{i=0}^{|w|-1}\{|c_{\tau}(X_{m}^{n}w_{-|w|}\ldots w_{-|w|+i})|\leq n+i+1-m\},
\]
which says that one more $w$ can be concatenate to $X_{m}^{n}$ without needing to look back before time $m$, and
\[
H[m,n]:=\G\{X_{m}^{m+\sigma|w|-1}=v\,, \,\,|c_{\tau}(X_{m}^{i})|\leq i-m\,,\,\,i\in\{m+\sigma|w|,\ldots,n\}\D\}\cap h[m,n].
\]
Both of them are measurable with respect to the $\sigma$-algebra generated by $X_{m}^{n}$. Finally, define
\[
H[m,+\infty]:=\G\{X_{m}^{m+\sigma|w|-1}=v\,, \,\,|c_{\tau}(X_{m}^{i})|\leq i-m\,,\,\,i\geq m+\sigma|w|\D\}
\]
which is measurable with respect to the $\sigma$-algebra generated by $X_{m}^{+\infty}$.
By definition (\ref{thetaX}) of $\theta^{{\bf X}}[m,n]$, $-\infty<m\leq n\leq+\infty$, we have that if 
\begin{equation}\label{sequencet}
t_{1}+\sigma|w|\leq t_{2}+\sigma|w|\leq\ldots\leq t_{n-1}+\sigma|w|\leq t_{n},
\end{equation}
then
\begin{equation}\label{8}
\bigcap_{l=1}^{n}\G\{\theta^{{\bf X}}[t_{l},+\infty]=t_{l}\D\}=\bigcap_{l=1}^{n}H[t_{l},t_{l+1}-1]
\end{equation}
where $t_{n+1}:=+\infty$. This is an intersection of independent events. Then, we observe that by stationarity,
\[
\mathbb{P}(H[j,+\infty])=\mathbb{P}(H[0,+\infty])
\]
and
\[
\mathbb{P}(H[-j,-1])=\mathbb{P}(H[-j,+\infty]|H[0,+\infty])\,,\,\,\,\forall j\geq\sigma|w|.
\]
Together with (\ref{8}), this yields for any sequence of integers $t_{1},\ldots, t_{n}$ verifying (\ref{sequencet})
\[
\mathbb{P}(\xi^{{\bf X}}_{t_{l}}=1,l=1,\ldots,n)=\mathbb{P}(\xi^{{\bf X}}_{0}=1)\prod_{l=1}^{n-1}\mathbb{P}(\xi^{{\bf X}}_{-(t_{l+1})-t_{l}}=1|\xi^{{\bf X}}_{0}=1)
\]
and therefore, the chain $\boldsymbol{\xi}^{\bf X}$ is renewal, proving the existence of the visible regeneration scheme. By stationarity and by definition (\ref{thetaX}) of the random variable $\theta^{\bf X}$, we have  for any $m\geq \sigma|w|$
\[
\mathbb{P}(T^{\bf X}_{l+1}-T^{\bf X}_{l}\geq m)=\mathbb{P}(\theta^{\bf X}[1,+\infty]\leq-m|\theta^{\bf X}[0,+\infty]=0)=\mathbb{P}(\theta^{\bf X}[0]<-m+1),
\]
which is summable in $m$, concluding the proof of Theorem \ref{theo2}.

\section{The complete perfect simulation algorithm, simulation and discussion}\label{realistic}

\subsection{The algorithm}

Algorithm 1 is ``simplistic'' in the sense that in order to compute $\theta[m,n]$, it uses the function $\mathcal{L}$ which is not explicit. A more complete algorithm is given below. We recall that for any $a_{m}^{n}\in A^{n-m+1}$ and $u\in[0,1[$
\[
F(u,a_{m}^{n}):=\sum_{a\in A}a.\mathbbm{1}\{u\in K(a|c_{\tau}(a_{m}^{n}))\}+\star\mathbbm{1}\{u\in [\#\mathcal{E}\epsilon,1[,\, c_{\tau}(a_{m}^{n})=\emptyset\},
\]
where if $m=n+1$, then $c_{\tau}(a_{m}^{n})=c_{\tau}(\emptyset)=\emptyset$. This function contains all the information we need about the probabilistic context tree $(\tau,p)$, and we suppose that it is already implemented in the software used for programing the algorithm.
\begin{algorithm}[h]
\caption{``Explicit'' perfect simulation algorithm of the sample $\XU_{m}^{n}$} 
\begin{algorithmic}[1]
\STATE {\it Input:} $m$, $n$, $F$; {\it Output:} $\theta[m,n]$, $(\XU_{\theta[m,n]},\ldots,\XU_{n})$
\STATE  Sample $U_{m},\ldots,U_{n}$ uniformly in $[0,1[$\\
\STATE $i \leftarrow m$, $B=\{m,\ldots,n\}$, $\theta[m,n] \leftarrow m$, $\XU_{m}^{n}\leftarrow\star^{n-m+1}$\\
\WHILE{$F(U_{i},\XU_{m}^{i-1})\in A$ and $B\neq \emptyset$}                           
\STATE $\XU_{i}\leftarrow F(U_{i},\XU_{m}^{i-1})$\\
\STATE $B\leftarrow B\setminus \{i\}$\\
\STATE $i\leftarrow i+1$
\ENDWHILE\\
\STATE $i\leftarrow m$
\WHILE{$B\neq\emptyset$} 
\STATE $i\leftarrow i-1$\\
\STATE $B\leftarrow B\cup\{i\}$\\
\STATE Sample $U_{i}$ uniformly in $[0,1[$\\
\WHILE {$U_{i}\in[\#\mathcal{E}\epsilon,1[$}
\STATE $i\leftarrow i-1$\\
\STATE $B\leftarrow B\cup\{i\}$\\


\STATE Sample $U_{i}$ uniformly in $[0,1[$\\
\ENDWHILE\\
\STATE $\XU_{i}\leftarrow F(U_{i},\emptyset)$
\STATE $B\leftarrow B\setminus\{i\}$\\
\STATE $t\leftarrow \min B$
\WHILE{$F(U_{t},\XU_{i}^{t-1})\in A$ and $B\neq\emptyset$}
\STATE $\XU_{t}\leftarrow F(U_{t},\XU_{i}^{t-1})$\\
\STATE $B\leftarrow B\setminus\{t\}$\\
\STATE $t\leftarrow \min B$
\ENDWHILE\\
\ENDWHILE\\
\STATE $\theta[m,n]\leftarrow i$
\RETURN $\theta[m,n]$, $(\XU_{\theta[m,n]},\ldots, \XU_{n})$
\end{algorithmic}
\end{algorithm}
\begin{figure}[htp]
\centering
\includegraphics{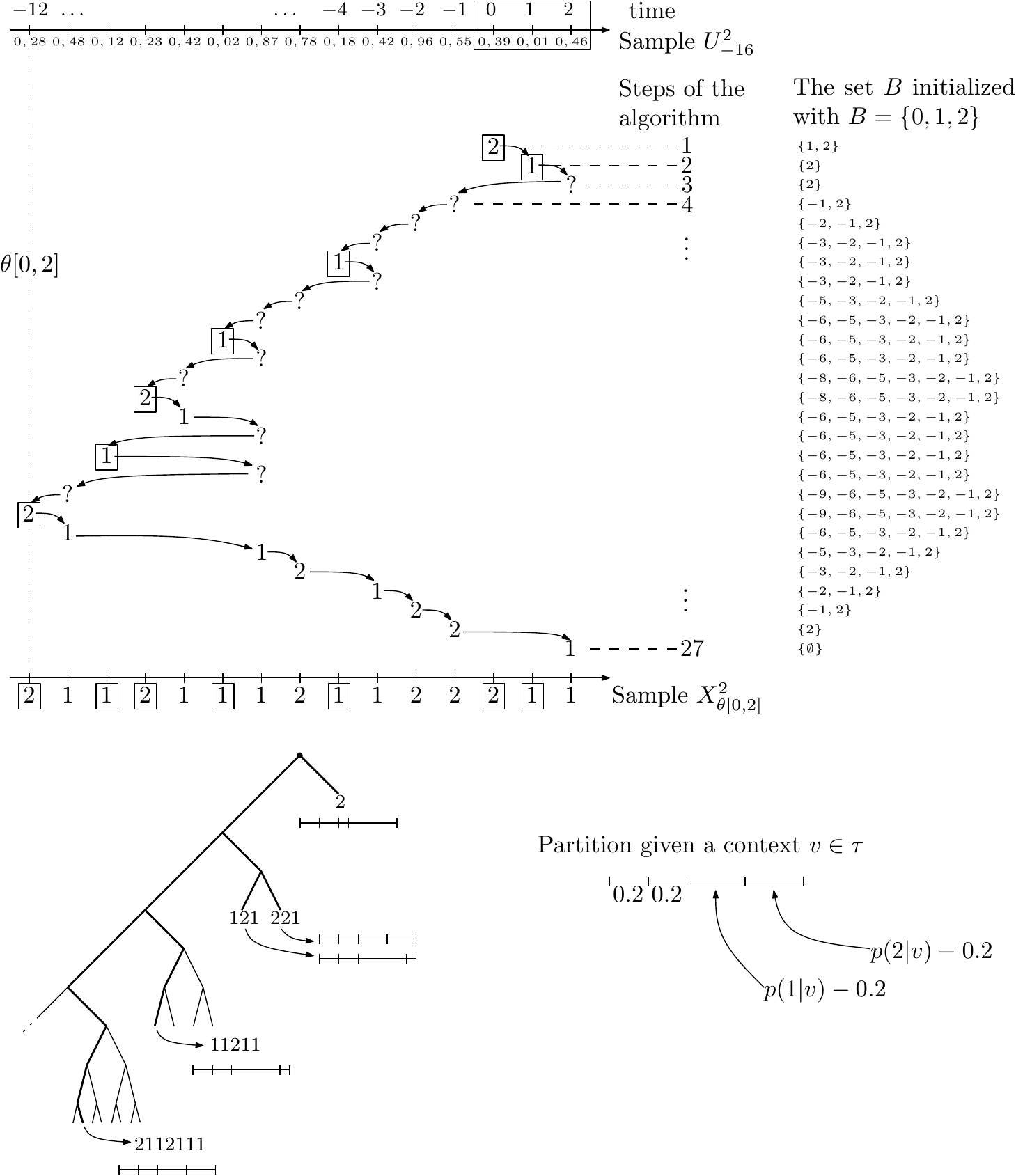}
\caption{An explicit perfect simulation, using a sample $U_{-12}^{2}$. The transition probabilities of the context tree are given in the bottom of the Figure. We refer to Figure \ref{fig:partitionof[0,1]} for the reader to recall what the small intervals represent. Both symbols $1$ and $2$ are $0.2$-regulars. The probability transitions are: $p(2|2)=0.7$, $p(2|121)=0.3$, $p(2|122)=0.5$, $p(2|11211)=0.3$ and $p(2|1112112)=0.5$}
\label{fig:algorithm2}
\end{figure}
\begin{figure}[h!]
\begin{center}
\includegraphics[scale=0.7]{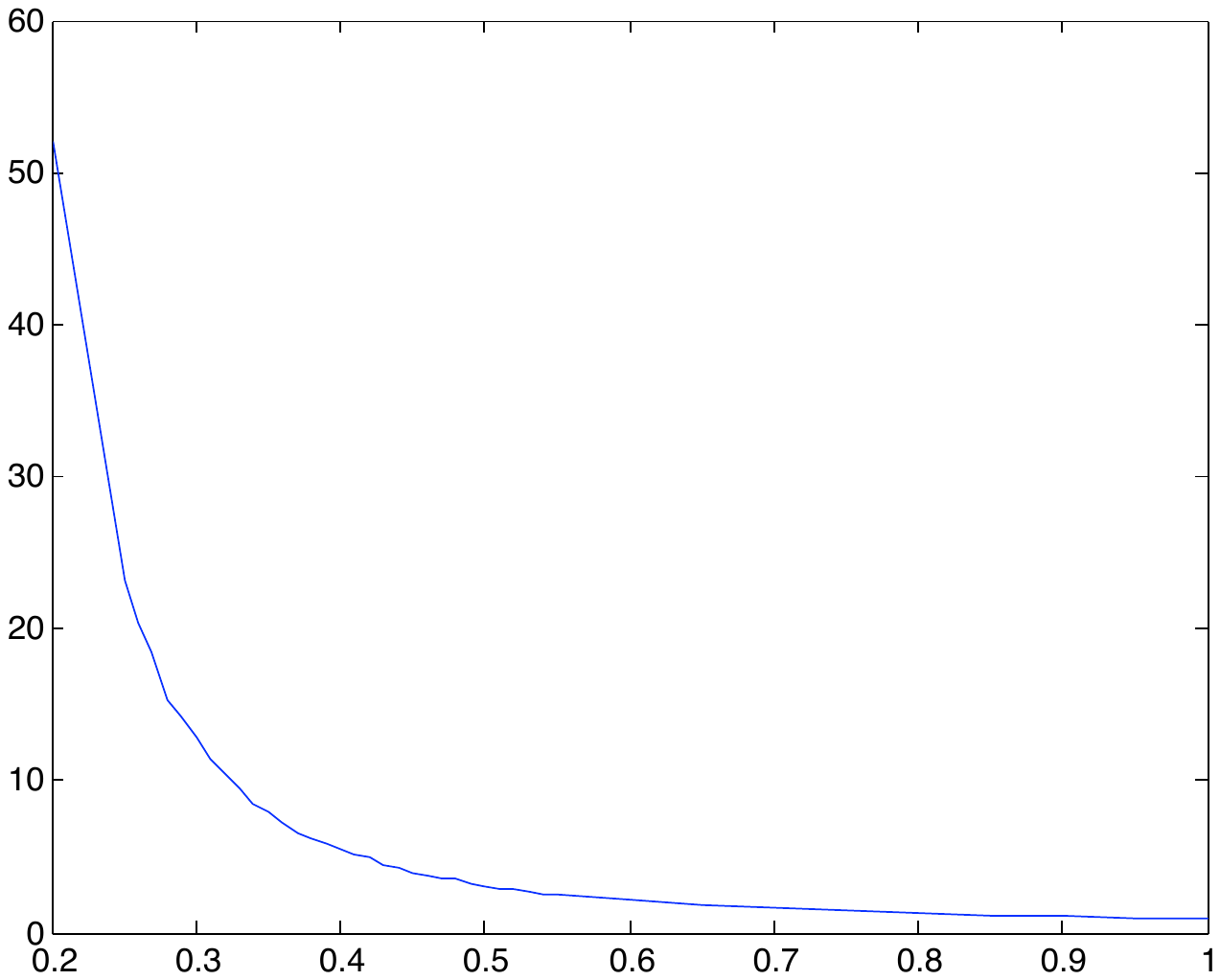}
\caption{Graph representing the influence of the value of $\epsilon$ on the quantity $\mathbb{E}|\theta[0]|$. The $x$-axis represents the successives values of $\epsilon$, from $0.2$ to $1$. The $y$-axis represents $\mathbb{E}|\theta_{\epsilon}[0]|$, the expected value of $\theta[0]$, when this later is computed using value $\epsilon$.}
\label{fig:epsilondobom}
\end{center}
\end{figure}

The algorithm uses two variables: $i$ which is a time index and $B$ which is a set of time indexes. 
The set $B$ keeps track of the set of sites which have to be constructed. It is initialized with $B=\{m,\ldots,n\}$, which is the set of time indexes  to be constructed, and the algorithm terminates when $B=\{\emptyset\}$.
In the first ``while'' loop (lines 2 to 8), we sample $U_{m}^{n}$, and directly attempt to construct $\XU_{m}^{n}$ using this information. If the algorithm manages to do this, it returns $\theta[m,n]=m$ and the constructed sample $\XU_{m}^{n}$. Otherwise, it enters the second while loop (lines 10 to 27). In this loop, each time the algorithm cannot construct the next site of $B$, it generates a new uniform random variable backward in time. At each  new generated random variable, the algorithm attempts to go as far as possible in the construction of the remaining sites of $B$ using the uniform that have been previously generated.

\subsection{Simulation}

We will say that the algorithm makes a step each time it ``enters''  a ``while'' loop. On Figure \ref{fig:algorithm2}, it corresponds to the number of arrows, plus one. The total number of steps $N[m,n]$ needed for the construction of a sample $X_{m}^{n}$ is
\[
N[m,n]=(n-m+1)+2\times(m-\theta[m,n]).
\]
Let us denote by $C$ the maximum number of operations the algorithm need in order to make a step. Suppose we want to construct a sample $X_{0}^{n-1}$, then the expected number of operations is bounded above by 
\[
C\times \left(n+2\times \mathbb{E}|\theta[0,n-1]|\right).
\]
Figure \ref{fig:algorithm2} illustrates an explicit perfect simulation of this chain using a finite sample of  ${\bf U}$, in the case where $\epsilon=0,2$ and both symbols are $\epsilon$-regular.

The results of the present paper tell us that in the conditions of Theorem \ref{theo1}, this expectation is finite. However, no insight is given on how large it can be. This is due to the fact that we did not manage to obtain sufficiently good explicit bounds for the probability of return to $0$ of the chain $\bf{D}^{(0)}$. This is also the case of \cite{comets/fernandez/ferrari/2002}. Anyway, this bound should depends strongly on the parameter $\epsilon$, as in \cite{comets/fernandez/ferrari/2002} (see (2.4), (2.5) therein, their $a_{0}$ corresponds basically to our $\#\mathcal{E}\epsilon$). In absence of such bounds, we implemented the above pseudo-code in the case of the context tree of Section \ref{anexample}. We assume  that $p(2|v)=\epsilon$ for any $v\in\tau$. Notice that this case corresponds to the i.i.d. chain with probability $\epsilon$ to get the symbol $2$. This assumption simplifies considerably the implementation of the algorithm and  gives us the largest possible regeneration times within the class of probabilistic context trees for which the symbol $2$ is $\epsilon$-regular. 
We used increasing values of $\epsilon$, from $0,2$ to $1$, and for each value, we made the mean over $10,000$ iterations of Algorithm 2.  The resulting graph is given in Figure \ref{fig:epsilondobom}. We can derive the corresponding expected number of steps $\mathbb{E}N[0]$ realized by the algorithm, and the expected number of operations too.

\section{Final comments and references}

The first study of chains of infinite order seems to come back to the seminal papers of \citet{onicescu/mihoc/1935}. They called these chains \emph{``cha\^ines \`a liaison compl\`etes''} (chains with complete connections).
Then \citet{doeblin/fortet/1937} proved the results on speed of convergence towards the invariant measure under the  continuity conditions. We mention without further details some works as, for example,  \citet{harris/1955}, \citet{lalley/1986}, \citet{berbee/1987}, \citet{bressaud/fernandez/galves/1999a}, \citet{johansson/oberg/2003} among others. We refer to the book of \citet{iosifescu/grigorescu/1990} for a complete review of the area, and to the book of \citet{fernandez/ferrari/galves/2001} for an introduction to the constructive approach. 

Chains with variable length have been introduced by \cite{rissanen/1983} as a universal model for data compression. It has been shown to have a great applicability in statistical inference and modeling.  For a review and reference in this area, we refer to the paper by \citet{galves/eva/2008}.


\vspace{0.1cm}
On perfect simulation using the CFTP method, we refer to the webpage of Prof. David Bruce Wilson: {\ttfamily http://dbwilson.com/exact/}.
 The reader will find therein an extensive list of publications in the area. A very interesting issue, related to our work  is whether or not the $\epsilon$-regularity (or the weakly non-nullness) assumption is necessary for the existence of a (not necessary practical) CFTP algorithm. We mention that necessary conditions exist for  Markov chains: \cite{foss/tweedie/1998} shown that such a coupling exists if and only if the Markov chain is geometrically ergodic. 

\section*{Acknowledgements}

I am thankful to my PhD advisor Prof. A. Galves for his invaluable remarks and help.

\bibliography{sandro_bibli}

%






\end{document}